\theoremstyle{plain}
\newtheorem{theorem}{Theorem}
\newtheorem{corollary}[theorem]{Corollary}
\newtheorem{definition}[theorem]{Definition}
\newtheorem{lemma}[theorem]{Lemma}
\newtheorem*{problem*}{Problem}
\newtheorem{proposition}[theorem]{Proposition}
\theoremstyle{definition}
\newtheorem{remark}[theorem]{Remark}
\numberwithin{equation}{section}
\numberwithin{theorem}{section}
\newcommand{\rr}{\mathbb{R}}
\newcommand{\ric}{\operatorname{Ric}}
\newcommand{\loc}{_{loc}}
\newcommand{\lpl}[2]{L^{#1}_{loc}(#2)}
\newcommand{\ccomp}[2]{C^{#1}_{c}(#2)}
\newcommand{\dvol}{\ \textnormal{dv}}
\newcommand{\dint}[1]{\ \textnormal{d} #1}
\newcommand{\system}[2]{\left\{\begin{array}{#1} #2 \end{array} \right.}
\newcommand{\inn}{\textnormal{in}\ }
\newcommand{\e}{\epsilon}
\begin{document}

\begin{abstract}
On a complete Riemannian manifold $(M,g)$, we consider $L^{p}_{loc}$ distributional solutions of the the differential inequality $-\Delta u + \lambda u \geq 0$ with $\lambda >0$ a locally bounded function that may decay to $0$ at infinity. Under suitable growth conditions on the $L^{p}$ norm of $u$ over geodesic balls, we obtain that any such solution must be nonnegative. This is a kind of generalized $L^{p}$-preservation property that can be read as a Liouville type property for nonnegative subsolutiuons of the equation $\Delta u \geq \lambda u$. An application of the analytic results to  $L^{p}$ growth estimates of the extrinsic distance of  complete minimal submanifolds is also given.
\end{abstract}


\title[$L^p\loc$ positivity preservation and Liouville-type theorems]{$L^p\loc$ positivity preservation and\\ Liouville-type theorems}

\author{Andrea Bisterzo}
\address{Universit\`a degli Studi di Milano-Bicocca\\ Dipartimento di Matematica e Applicazioni \\ Via Cozzi 55, 20126 Milano - ITALY}
\email{a.bisterzo@campus.unimib.it}

\author{Alberto Farina}
\address{Universit\'{e} Picardie Jules Verne, LAMFA, UMR CNRS 7352, 33 rue St. Leu, 80039 Amiens - FRANCE}
\email{alberto.farina@u-picardie.fr}

\author{Stefano Pigola}
\address{Universit\`a degli Studi di Milano-Bicocca\\ Dipartimento di Matematica e Applicazioni \\ Via Cozzi 55, 20126 Milano - ITALY}
\email{stefano.pigola@unimib.it}

\maketitle


\section{Introduction}
In order to set our work, we first recall the notion of \textit{differential inequality in the sense of distributions}. Let $(M,g)$ be a Riemannian manifold and $\lambda$ a smooth function over $M$. Given $f\in \lpl{1}{M}$, we say that a function $u\in \lpl{1}{M}$ satisfies $-\Delta u + \lambda u \geq f$ (respectively $\leq f$) in the sense of distributions if
\begin{align*}
\int_M u (-\Delta \varphi +\lambda \varphi) \dvol \geq \int_M f \varphi \dvol \quad (\textnormal{resp.}\ \leq)
\end{align*}
for every $0\leq \varphi\in \ccomp{\infty}{M}$. Using an integration by parts, one can easily see that the notion of differential inequality in the sense of distributions is a generalization of the notion of weak differential inequality, which involves $W^{1,1}$ functions.

\begin{definition}[Positivity preserving property]\label{Def:PPP}
Given a Riemannian manifold $(M,g)$ and a family of function $\mathcal{S}\subseteq \lpl{1}{M}$, we say that $M$ has the $\mathcal{S}$ \textnormal{positivity preserving property} if any function $u\in \mathcal{S}$ that satisfies $-\Delta u + u \geq 0$ in the sense of distributions is nonnegative almost everywhere in $M$.
\end{definition}

Historically, the notion of positivity preserving property is motivated by the work of M. Braverman, O. Milatovic and M. Shubin, \cite{braverman2002essential}, where the authors conjectured that every complete Riemannian manifold is $L^2$ positivity preserving. In particular, this conjecture stimulated the study of the correlation between completeness and $L^p$ positivity preserving property for any $p\in [1,+\infty]$.

After some partial results involving constraints on the geometry of the manifold at hand, and covering all cases $p\in[1,+\infty]$, in \cite{pigola2021p} (see also \cite{pigola2023approximation} and \cite{guneysu2023new}) the authors proved that any Riemannian manifold $(M,g)$ is $L^p$ positivity preserving for every $p\in (1,+\infty)$, under the only assumption that $M$ is complete.

For what concerns the case $p=+\infty$, the recent work \cite{bisterzo2022infty} points out that the $L^\infty$ positivity preservation  is in a certain sense transversal to the notion of (geodesic) completeness. Indeed, the authors showed that a necessary and sufficient condition for a Riemannian manifold to satisfy the $L^\infty$ positivity preserving property is the stochastic completeness of the space.

Modeled on Definition \ref{Def:PPP}, in what follows we will consider a notion of positivity preserving property for slightly more general differential operators. In particular, we will deal with operators of the form $\mathcal{L}:=-\Delta+\lambda$, where $\lambda$ is a positive and locally bounded function. In this context, the present work generalizes the result of \cite{pigola2021p} and  \cite{pigola2023approximation} for complete Riemannian manifolds providing the $\mathcal{S}_p$ positivity preservation for any $p\in (1,+\infty)$, where $\mathcal{S}_p$ is the family of locally $p$-integrable functions satisfying a certain growth condition depending on the decay rate of the potential $\lambda$ at infinity. To follow, we obtain two results for the case $p=1$ when $\lambda$ is a positive constant, under the assumption that there exists a family of suitable (exhausting) cut-off functions whose laplacians have a ``good'' decay.

We stress that the results we obtained can be read as $L^p$ Liouville-type theorems when one deals with nonnegative solutions to $\Delta u \geq \lambda u$. In this direction we have a more direct comparison with the existing literature where, typically, one introduces a further pointwise control on the growth of the function and requires much more regularity on the solution. In the next sections  we shall comment on these aspects.\smallskip

The paper is organized as follows. In Section \ref{Sec:PreliminaryResults} we  prove an integral inequality in low regularity which represents the core of the $L^{p}_{loc}$ argument, $1<p<+\infty$. Section \ref{Sec:Lploc} is devoted to a generalized $L^{p}_{loc}$ Positivity Preservation for distributional solutions of $-\Delta u + \lambda u \geq 0$ with a possibly decaying functions $\lambda(x)$. The case $p=1$ will be dealt with in Section \ref{Sec:L1loc} under additional curvature restrictions that guarantee the existence of the so called Laplacian cut-offs. In the final Section \ref{Sec:Applications} we present an application to complete Euclidean minimal submanifolds with an extrinsic distance growth measured in integral sense. This generalizes the well known fact that complete minimal submanifolds in Euclidean space  with quadratic-exponential volume growth must be unbounded; see \cite{Karp-MathAnn} and \cite{PRS-PAMS}.


\section{Some preliminary results}\label{Sec:PreliminaryResults}

In what follows, if $u$ is a real-valued function we denote
\begin{align*}
u^+:=\max\{u,0\} \quad \textnormal{and} \quad u^-:=\max\{-u,0\}.
\end{align*}
We start recalling the Brezis-Kato inequality in a general Riemannian setting. This result is obtained in \cite{pigola2023approximation} for the general inequality $\Delta u\geq f\in L^1\loc$.

\begin{proposition}[Brezis-Kato inequality]
Let $(M,g)$ be a possibly incomplete Riemannian manifold and $\lambda$ a measurable function.

If $u\in \lpl{1}{M}$ is so that $\lambda u \in \lpl{1}{M}$ and satisfies $-\Delta u + \lambda u \leq 0$ in the sense of distributions, then $-\Delta u^+ + \lambda u^+ \leq 0$ in the sense of distributions.
\end{proposition}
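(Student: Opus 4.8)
The plan is to reduce the statement to the classical Brezis--Kato (Kato's) inequality $\Delta u^{+} \geq \chi_{\{u > 0\}} \Delta u$ in the distributional sense, which is exactly the content of the cited result from \cite{pigola2023approximation} applied to $\Delta u \geq f$ with $f := \lambda u \in \lpl{1}{M}$. First I would observe that the hypothesis $-\Delta u + \lambda u \leq 0$ in the sense of distributions says precisely that $\Delta u \geq \lambda u$ distributionally, and since $\lambda u \in \lpl{1}{M}$ we may legitimately take $f = \lambda u$ in the general Kato inequality; this gives $\Delta u^{+} \geq \chi_{\{u > 0\}} \lambda u$ in the sense of distributions.

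Next I would rewrite the right-hand side: on the set $\{u > 0\}$ one has $u = u^{+}$, so $\chi_{\{u > 0\}} \lambda u = \chi_{\{u > 0\}} \lambda u^{+}$, and on the complement $u^{+} = 0$ so $\chi_{\{u > 0\}} \lambda u^{+} = \lambda u^{+}$ holds almost everywhere regardless. Hence $\chi_{\{u > 0\}} \lambda u = \lambda u^{+}$ a.e., and in particular $\lambda u^{+} \in \lpl{1}{M}$ because $0 \leq \lambda u^{+} \leq |\lambda u| \in \lpl{1}{M}$ on $\{u>0\}$ and vanishes elsewhere (here no sign on $\lambda$ is needed, only that $\lambda u \in L^1\loc$; on $\{u>0\}$, $\lambda u^+ = \lambda u$ so it is locally integrable there). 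Substituting, the distributional inequality becomes $\Delta u^{+} \geq \lambda u^{+}$, i.e. $-\Delta u^{+} + \lambda u^{+} \leq 0$ in the sense of distributions, which is the claim. Concretely, for every $0 \leq \varphi \in \ccomp{\infty}{M}$,
\begin{align*}
\int_M u^{+}(-\Delta\varphi + \lambda\varphi)\dvol = \int_M u^{+}(-\Delta\varphi)\dvol + \int_M \lambda u^{+}\varphi\dvol \leq -\int_M \lambda u^{+}\varphi\dvol + \int_M \lambda u^{+}\varphi\dvol = 0,
\end{align*}
where the inequality is the rephrased Kato inequality tested against $\varphi$.

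The only delicate point is checking that the general Kato inequality from \cite{pigola2023approximation} does apply with the measurable (possibly sign-changing, possibly unbounded) weight $\lambda$ absorbed into $f$: one must make sure that $f = \lambda u$ is genuinely in $\lpl{1}{M}$, which is given as a hypothesis, and that no additional regularity or completeness is required — and indeed the cited proposition is stated for possibly incomplete $(M,g)$ and arbitrary $f \in L^1\loc$. I do not expect a serious obstacle here; the argument is essentially a bookkeeping reduction, and the main conceptual work has already been done in establishing the general distributional Kato inequality for $\Delta u \geq f$.
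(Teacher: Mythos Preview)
Your proposal is correct and follows exactly the route the paper indicates: the paper does not give an independent proof but simply records that the proposition is a special case of the general Kato inequality $\Delta u^{+}\geq \chi_{\{u>0\}} f$ from \cite{pigola2023approximation} applied with $f=\lambda u\in L^{1}_{loc}(M)$, and your argument makes this reduction explicit, including the verification that $\chi_{\{u>0\}}\lambda u=\lambda u^{+}$ a.e.\ and that $\lambda u^{+}\in L^{1}_{loc}(M)$.
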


As a consequence, in the next proposition we get a refinement of the regularity result obtained in \cite{pigola2023approximation} for complete manifolds. The inequality \eqref{Eq1:Regularity} will be the key tool in the proof of the positivity preserving properties stated in Section \ref{Sec:Lploc}.

\begin{proposition}\label{Prop1:Regularity}
Let $(M,g)$ be a complete Riemannian manifold and $0\leq \lambda\in L^\infty\loc(M)$. Assume that $u\in \lpl{1}{M}$ satisfies $-\Delta u+\lambda u \geq 0$ in the sense of distributions.

Then, $u^-\in \lpl{\infty}{M}$ and $(u^-)^\frac{p}{2}\in W^{1,2}\loc(M)$ for every $p\in(1,+\infty)$. Moreover, $u^-$ satisfies
\begin{align}\label{Eq1:Regularity}
(p-1) \int_M \lambda (u^-)^p \varphi^2 \dvol \leq \int_M (u^-)^p |\nabla \varphi|^2 \dvol
\end{align}
for every $0\leq \varphi \in \ccomp{0,1}{M}$.
\end{proposition}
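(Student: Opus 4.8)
The plan is to combine the Brezis–Kato inequality with a Caccioppoli-type testing argument, handling the low regularity by a truncation/approximation procedure. First I would apply the Brezis–Kato inequality: since $u$ satisfies $-\Delta u + \lambda u \geq 0$ distributionally, the function $v := u^-$ formally satisfies $-\Delta v + \lambda v \leq 0$ in the sense of distributions (one should check that $\lambda u \in L^1\loc$, which follows from $\lambda \in L^\infty\loc$ and $u \in L^1\loc$, and reduce to the setting of the proposition by noting $u^- = (-u)^+$ where $-u$ satisfies $-\Delta(-u) + \lambda(-u) \leq 0$). Thus it suffices to prove the claimed regularity and inequality \eqref{Eq1:Regularity} for a nonnegative $v \in L^1\loc$ with $-\Delta v + \lambda v \leq 0$.

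Next I would establish that $v \in L^\infty\loc$. The standard route is a local De Giorgi–Nash–Moser iteration for subsolutions: from $-\Delta v + \lambda v \leq 0$ with $\lambda \geq 0$ we get $\Delta v \geq 0$ distributionally (dropping the good term $\lambda v \geq 0$), so $v$ is a nonnegative subharmonic function in $L^1\loc$, hence by local elliptic estimates (mean value / Moser iteration on balls, using completeness only to have a genuine exhaustion by relatively compact balls) $v \in L^\infty\loc$. This is where I would cite or reprove the interior $L^\infty$ bound; it is essentially contained in the regularity result of \cite{pigola2023approximation} that the authors say they are refining, so this step should be quotable.

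With $v \in L^\infty\loc$ in hand, the heart of the matter is the Caccioppoli estimate \eqref{Eq1:Regularity}. Formally one tests the inequality $-\Delta v + \lambda v \leq 0$ against $\psi = v^{p-1}\varphi^2$ with $0 \leq \varphi \in C^{0,1}_c(M)$, integrates by parts, and uses Young's inequality on the cross term $\nabla v \cdot \nabla \varphi$ to absorb it, obtaining
\begin{align*}
(p-1)\int_M v^{p-2}|\nabla v|^2 \varphi^2 \dvol + \int_M \lambda v^p \varphi^2 \dvol \leq C(p)\int_M v^p |\nabla \varphi|^2 \dvol,
\end{align*}
which both gives $(v^{p/2}) \in W^{1,2}\loc$ and, after optimizing the constant, yields \eqref{Eq1:Regularity} with the sharp factor $(p-1)$ and no gradient term on the right (discarding the nonnegative gradient term on the left). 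The main obstacle is rigor at low regularity: $v^{p-1}$ is not an admissible test function when $v$ is merely $L^\infty\loc$ and not known a priori to be in $W^{1,2}\loc$, so one must regularize. I would introduce the bounded truncation-type functions $v_\e := \min\{v, 1/\e\}$ or better $F(v)$ with $F$ smooth, bounded, with bounded derivative, and test with $F(v)\varphi^2$ where this is justified (e.g.\ by a Friedrichs mollification argument on charts, or by appealing to the Kato-type/Brezis–Kato machinery already available), derive the inequality with $F$, then let $F \nearrow (\cdot)^{p-1}$ via monotone and dominated convergence, using the already-established local boundedness of $v$ to control the integrals and Fatou on the left-hand gradient term. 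Getting the precise constant $(p-1)$ rather than a worse $C(p)$ requires choosing the Young's inequality parameter optimally and checking nothing is lost in the limit; this bookkeeping, together with the approximation justification, is the only delicate point, the rest being the routine integration by parts.
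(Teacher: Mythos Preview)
Your outline is correct and follows the same skeleton as the paper: apply Brezis--Kato to pass to the nonnegative subsolution $v=u^-$, then test against $v^{p-1}\varphi^2$, use Young's inequality with the optimal parameter, and regularize to make the testing rigorous. The paper differs from you in two technical choices worth noting. First, it quotes \cite[Theorem~3.1]{pigola2021p} at the outset to obtain \emph{both} $u^-\in L^\infty_{loc}$ and $(u^-)^{p/2}\in W^{1,2}_{loc}$ (in particular $u^-\in W^{1,2}_{loc}$), so that testing against compactly supported $W^{1,2}$ functions is legitimate from the start; you instead propose to extract the $W^{1,2}_{loc}$ regularity as a by-product of the Caccioppoli estimate itself. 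Second, and this is the more useful observation, the paper regularizes not by truncation from above but by the additive shift $v_\delta:=u^-+\delta$: then $v_\delta^{p-1}\varphi^2$ is an honest $W^{1,2}$ test function because $v_\delta\geq\delta>0$ makes $v_\delta^{p-2}$ locally bounded. This cleanly handles the range $1<p<2$, where $t\mapsto t^{p-1}$ is not Lipschitz at $0$ and your approximants $F$ with bounded derivative cannot converge in $C^1$ near the origin; your scheme can be pushed through, but the $\delta$-shift avoids the extra bookkeeping. After Young with $\epsilon\nearrow p-1$ and then $\delta\to 0$ via dominated convergence, the paper arrives at \eqref{Eq1:Regularity} with the sharp factor $p-1$.
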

\begin{proof}
By the Brezis-Kato inequality, the function $u^- \in \lpl{1}{M}$ satisfies $\Delta u^-\geq \lambda u^-$ in the sense of distributions. Therefore, by \cite[Theorem 3.1]{pigola2021p} it follows that $u^- \in \lpl{\infty}{M}$ and $(u^-)^\frac{p}{2}\in W^{1,2}\loc(M)$ for every $p\in(1,+\infty)$.

To prove \eqref{Eq1:Regularity}, let $\delta>0$ and set $v_\delta:=u^-+\delta \in \lpl{\infty}{M}\cap W^{1,2}\loc(M)$. Clearly, for every $q>0$ the function $v_\delta^q$ belongs to $\lpl{\infty}{M}\cap W^{1,2}\loc(M)$ and by \cite[Lemma 5.4]{pigola2021p} its weak gradient satisfies
\begin{align}\label{App1:Regularity}
\nabla v^q_\delta = q v^{q-1}_\delta \nabla v_\delta.
\end{align}
Moreover, $\Delta v_\delta\geq \lambda u^-$ in the sense of distributions, implying
\begin{align*}
\int_M \lambda u^- \psi \dvol + \int_M g(\nabla v_\delta, \nabla \psi) \leq 0
\end{align*}
for every $\psi \in W^{1,2}\loc(M)$ that is nonnegative almost everywhere. In particular, choosing $\psi=v_\delta^{p-1}\varphi^2$ with $\varphi\in \ccomp{0,1}{M}$ and using \eqref{App1:Regularity}, we get
\begin{align*}
0\geq & \int_M \lambda u^- v_\delta^{p-1} \varphi^2 \dvol + (p-1)\int_M v_\delta^{p-2} \varphi^2 |\nabla v_\delta|^2 \dvol \\
&+ 2 \int_M \varphi v_\delta^{p-1} g(\nabla v_\delta, \nabla \varphi) \dvol.
\end{align*}
By Cauchy-Schwarz inequality and Young's inequality, for any $\e \in (0,p-1)$ we have
\begin{align*}
2\varphi v_\delta^{p-1} g(\nabla v_\delta,\nabla \varphi) & \geq - 2 \varphi v_\delta^{p-1}|\nabla v_\delta| |\nabla \varphi|\\
&\geq -\e \varphi^2 v_\delta^{p-2}|\nabla v_\delta|^2-\e^{-1} v_\delta^p |\nabla \varphi|^2
\end{align*}
and thus
\begin{align*}
0\geq & \int_M \lambda u^- v_\delta^{p-1} \varphi^2 \dvol + (p-1-\e)\int_M v_\delta^{p-2} \varphi^2 |\nabla v_\delta|^2 \dvol \\
&- \e^{-1} \int_M v_\delta^{p} |\nabla \varphi|^2 \dvol.
\end{align*}
As $\e\to p-1$ we get
\begin{align*}
(p-1) \int_M \lambda  u^- v_\delta^{p-1} \varphi^2 \dvol \leq \int_M v_\delta^p |\nabla \varphi|^2 \dvol
\end{align*}
that, together with the fact that
\begin{align*}
\begin{array}{rll}
\lambda u^- v_\delta^{p-1}&\xrightarrow[]{\delta \to 0} \lambda (u^-)^p & \inn \lpl{1}{M}\\
v_\delta^p &\xrightarrow[]{\delta \to 0} (u^-)^p & \inn \lpl{1}{M}
\end{array}
\end{align*}
by Dominated Convergence Theorem, implies
\begin{align*}
(p-1) \int_M \lambda (u^-)^p \varphi^2 \dvol \leq \int_M (u^-)^p |\nabla \varphi|^2 \dvol
\end{align*}
obtaining the claim.
\end{proof}


\section{$L^p\loc$ positivity preserving property}
\label{Sec:Lploc}

In this section we face up the question of the $L^p\loc$ positivity preserving property for $p\in (1,+\infty)$, considering complete Riemannian manifolds and not requiring any curvature assumption.

Clearly, if the manifold is non-compact, we do not have any control on the growth at ``infinity'' of (the $p$-norm of) the general function $u\in \lpl{p}{M}$, making it impossible to retrace step by step what has been done in \cite{pigola2021p} and \cite{pigola2023approximation} in the $L^p$ case.

In addition, we also point out that we cannot expect to obtain a genuine positivity preserving property on the whole family of functions $\lpl{p}{M}$. Indeed, if $\lambda$ is a positive constant, then $u(x)=-e^{\sqrt{\lambda} x }$ is a negative function that solves $-u''+\lambda u=0$ in $\rr$. So the $L^p\loc$ positivity preserving property fails in general complete Riemannian manifolds.

Taking into account what we have observed so far, it seems natural to limit ourselves to the class of $L^p\loc$ functions whose $p$-norms satisfy a suitable (sub-exponential) growth condition.

We start with the following iterative lemma.

\begin{lemma}\label{Lem4:Inequality4}
Let $A>0$ and $f:[A,+\infty)\to (0,+\infty)$ be a nondecreasing function. Suppose there exist $\alpha>0, \delta\geq 0, \beta \geq 1$ and $\gamma>0$ so that
\begin{align}\label{Eq1:Inequality4}
f(r)\leq \frac{1}{\alpha (1+r)^{-\delta} h^\gamma+\beta} f(r+h)
\end{align}
for every $r\geq A$ and every $h>0$.

Then, for every $h>0$ the function $f$ satisfies
\begin{align*}
f(R)\geq f(A) \left(\alpha (1+R-h)^{-\delta}h^\gamma+\beta\right)^{\frac{R-A}{h}-1}
\end{align*}
for every $R\geq A+h$.
\end{lemma}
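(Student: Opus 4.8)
The plan is to iterate the one-step estimate \eqref{Eq1:Inequality4} along an arithmetic progression of step $h$, and then to combine the monotonicity of $f$ with the monotonicity of the weight $r\mapsto(1+r)^{-\delta}$.

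Fix $h>0$ and $R\geq A+h$, and let $n$ be the largest integer with $A+nh\leq R$. Since $R\geq A+h$ we have $n\geq 1$, and by maximality $n>(R-A)/h-1$. Applying \eqref{Eq1:Inequality4} at the points $r=A+kh$ for $k=0,1,\dots,n-1$ yields
\begin{align*}
f(A+kh)\,\bigl(\alpha(1+A+kh)^{-\delta}h^\gamma+\beta\bigr)\leq f(A+(k+1)h),\qquad k=0,\dots,n-1,
\end{align*}
and multiplying these $n$ inequalities (all the quantities being strictly positive) and cancelling the common factors $f(A+h),\dots,f(A+(n-1)h)$ one telescopes to
\begin{align*}
f(A+nh)\geq f(A)\prod_{k=0}^{n-1}\bigl(\alpha(1+A+kh)^{-\delta}h^\gamma+\beta\bigr).
\end{align*}

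Next I would bound the product from below. Because $\delta\geq 0$, the map $r\mapsto(1+r)^{-\delta}$ is nonincreasing, so the smallest factor in the product is the one with $k=n-1$; since $A+(n-1)h\leq R-h$ this factor is in turn $\geq\alpha(1+R-h)^{-\delta}h^\gamma+\beta$. Hence
\begin{align*}
f(A+nh)\geq f(A)\bigl(\alpha(1+R-h)^{-\delta}h^\gamma+\beta\bigr)^{n}.
\end{align*}
As $\alpha>0$, $h>0$ and $\beta\geq 1$, the base is strictly larger than $1$, so the right-hand side does not increase when the exponent $n$ is lowered to the smaller nonnegative number $(R-A)/h-1$. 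Finally, using that $f$ is nondecreasing and $A+nh\leq R$,
\begin{align*}
f(R)\geq f(A+nh)\geq f(A)\bigl(\alpha(1+R-h)^{-\delta}h^\gamma+\beta\bigr)^{\frac{R-A}{h}-1},
\end{align*}
which is the asserted bound.

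I do not expect any serious obstacle here, the argument being elementary. The two points that require care are the bookkeeping with the integer $n$ — it is precisely the demand that the exponent collapse cleanly to $(R-A)/h-1$ that forces the hypothesis $R\geq A+h$ — and the correct choice of the evaluation point $k=n-1$ for the decreasing weight $(1+r)^{-\delta}$, which is what makes $1+R-h$ (and not a larger argument) appear in the final estimate.
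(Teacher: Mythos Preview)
Your proof is correct and follows essentially the same approach as the paper's: iterate \eqref{Eq1:Inequality4} along the arithmetic progression $A,A+h,\dots,A+nh$, bound each factor by the smallest one (at $k=n-1$, using $A+(n-1)h\leq R-h$), and then lower the exponent from $n$ to $(R-A)/h-1$ using that the base exceeds $1$ and that $f$ is nondecreasing. The bookkeeping with $n$ and the evaluation point is handled cleanly.
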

\begin{proof}
Fixed $h>0$, by assumption we have $f(r)\leq \left(\alpha (1+r)^{-\delta} h^\gamma+\beta\right)^{-1} f(r+h)$ for any $r\geq A$. Iterating, for every $n\in \mathbb{N}$ we get
\begin{align*}
f(r) & \leq \left(\alpha (1+r)^{-\delta} h^\gamma+\beta\right)^{-1} f(r+h)\\
&\leq \left(\alpha (1+r)^{-\delta} h^\gamma+\beta\right)^{-1} \left(\alpha (1+r+h)^{-\delta} h^\gamma+\beta\right)^{-1} f(r+2h)\\
&\leq \left(\alpha (1+r+h)^{-\delta} h^\gamma+\beta\right)^{-2} f(r+2h)\\
&\leq ... \leq \left(\alpha (1+r+ (n-1) h)^{-\delta}h^\gamma+\beta\right)^{-n} f(r+nh)
\end{align*}
for any $r\geq A$. It follows that for every $R>A$
\begin{align*}
f(R)&\geq f(A+nh)\\
&\geq \left(\alpha (1+A+ (n-1) h)^{-\delta}h^\gamma+\beta\right)^n f(A)\\
&\geq \left(\alpha (1+A+ (n-1) h)^{-\delta}h^\gamma+\beta\right)^{\frac{R-A}{h}-1} f(A),
\end{align*}
where $n=n(R,A,h)$ is the unique natural number satisfying $A+(n+1)h\geq R\geq A+nh$. In particular, if $R\geq A+h$, then $\frac{R-A}{h}\geq 1$ obtaining
\begin{align*}
f(R)&\geq \left(\alpha (1+A+ (n-1) h)^{-\delta}h^\gamma+\beta\right)^{\frac{R-A}{h}-1} f(A)\\
&\geq \left(\alpha (1+R-h)^{-\delta}h^\gamma+\beta\right)^{\frac{R-A}{h}-1} f(A)
\end{align*}
since $\frac{R-A}{h}-1\geq n-1$. This concludes the proof.
\end{proof}

Combining Lemma \ref{Lem4:Inequality4} with Proposition \ref{Prop1:Regularity} and with the choice standard family of rotationally symmetric cut-off functions, we get the following theorem.

\begin{theorem}[Generalized $L^p\loc$ positivity preserving property]\label{Thm1:LpPPP} Let $(M,g)$ be a complete Riemannian manifold, $\lambda \in \lpl{\infty}{M}$ a positive function and $p\in (1,+\infty)$. Moreover, assume there exist $o\in M$ and a constant $C>0$ so that
\begin{align*}
\lambda(x)\geq \frac{C}{(1+d^M(x,o))^{2-\e}} \quad \forall x\in M,
\end{align*}
where $\e\in (0,2]$ and $d^M$ is the intrinsic distance on $M$.

If $u\in \lpl{p}{M}$ satisfies $-\Delta u+\lambda u \geq 0$ in the sense of distributions and
\begin{align}\label{Eq1:LpPPP}
\int_{B_R(o)} (u^-)^p \dvol = o\left(e^{\theta R^{\frac{\e}{2}}}\right) \quad as\ R\to +\infty,
\end{align}
where $\theta=\sqrt{\frac{(p-1)C}{e-1}}$, then $u\geq 0$.
\end{theorem}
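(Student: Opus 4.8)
The plan is to argue by contradiction, converting the integral inequality \eqref{Eq1:Regularity} of Proposition \ref{Prop1:Regularity} into an iteration for the $p$-mass function $f(R):=\int_{B_R(o)}(u^-)^p\dvol$. Suppose $u^-\not\equiv 0$. Since $M=\bigcup_{R>0}B_R(o)$, $u\in\lpl{p}{M}$, and geodesic balls are relatively compact (completeness), the function $f$ is finite and nondecreasing, and there is $A>0$ with $f(A)>0$; consequently $f(r)>0$ for all $r\ge A$. Proposition \ref{Prop1:Regularity} applies (as $u\in\lpl{1}{M}$ and $0\le\lambda\in L^\infty\loc(M)$), giving \eqref{Eq1:Regularity} for all nonnegative $\varphi\in\ccomp{0,1}{M}$.

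\emph{Step 1 (iteration inequality).} I would test \eqref{Eq1:Regularity} with the standard radial cut-offs: for $r\ge A$, $h>0$, let $\varphi_{r,h}$ equal $1$ on $B_r(o)$, $0$ off $B_{r+h}(o)$, and be linear in $d^M(\cdot,o)$ in the annulus, so $\varphi_{r,h}\in\ccomp{0,1}{M}$, $0\le\varphi_{r,h}\le 1$, $|\nabla\varphi_{r,h}|\le 1/h$ a.e., and $\nabla\varphi_{r,h}$ is supported in $B_{r+h}(o)\setminus B_r(o)$. On the left of \eqref{Eq1:Regularity} I bound $\lambda\ge C(1+r)^{-(2-\e)}$ on $B_r(o)$ (legitimate since $2-\e\ge 0$, so $(1+t)^{2-\e}$ is nondecreasing) and $\varphi_{r,h}\equiv 1$ there; on the right I use $|\nabla\varphi_{r,h}|^2\le h^{-2}$ on the annulus. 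This yields $\tfrac{(p-1)C}{(1+r)^{2-\e}}f(r)\le h^{-2}\bigl(f(r+h)-f(r)\bigr)$, i.e.
\[
f(r)\ \le\ \frac{1}{(p-1)C\,(1+r)^{-(2-\e)}h^{2}+1}\ f(r+h)\qquad\text{for every }r\ge A,\ h>0.
\]

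\emph{Step 2 (growth and contradiction).} This is exactly the hypothesis \eqref{Eq1:Inequality4} of Lemma \ref{Lem4:Inequality4} with $\alpha=(p-1)C$, $\delta=2-\e$, $\gamma=2$, $\beta=1$, so for every $h>0$ and $R\ge A+h$,
\[
f(R)\ \ge\ f(A)\Bigl((p-1)C\,(1+R-h)^{-(2-\e)}h^{2}+1\Bigr)^{\frac{R-A}{h}-1}.
\]
I then insert $h=h(R):=\sqrt{\tfrac{e-1}{(p-1)C}}\,R^{\,1-\e/2}$. Since $\e>0$, $h(R)=o(R)$, so $R\ge A+h(R)$ for large $R$; moreover $(p-1)C\,h(R)^{2}=(e-1)R^{2-\e}$, so the base equals $1+(e-1)\bigl(\tfrac{R}{1+R-h(R)}\bigr)^{2-\e}\ge e$ for large $R$ (an exact identity when $\e=2$; for $\e<2$ one uses $2-\e\ge 0$ together with $\tfrac{R}{1+R-h(R)}\ge 1$, valid once $h(R)\ge 1$). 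For the exponent, with $\theta=\bigl(\tfrac{(p-1)C}{e-1}\bigr)^{1/2}$ one has $\tfrac{R-A}{h(R)}-1=\theta\bigl(R^{\e/2}-AR^{\e/2-1}\bigr)-1\ge \theta R^{\e/2}-\theta A-1$ for $R\ge 1$, which is positive for large $R$. Hence $f(R)\ge f(A)\,e^{\theta R^{\e/2}-\theta A-1}$ for all large $R$, so $\liminf_{R\to+\infty}f(R)\,e^{-\theta R^{\e/2}}\ge f(A)\,e^{-\theta A-1}>0$, contradicting \eqref{Eq1:LpPPP}. Therefore $u^-\equiv 0$, i.e. $u\ge 0$ almost everywhere.

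\emph{Main obstacle.} Given Proposition \ref{Prop1:Regularity} and Lemma \ref{Lem4:Inequality4}, the skeleton is essentially forced; the one genuinely delicate point is the calibration of the cut-off width, which must be of order $R^{1-\e/2}$ so that the iterated base stabilizes (precisely at $e$) while the number of steps, hence the exponent, scales like $\theta R^{\e/2}$. This is exactly what pins down both the exponent $\e/2$ and the constant $\theta=\sqrt{(p-1)C/(e-1)}$ appearing in \eqref{Eq1:LpPPP}; a sharper constant could be squeezed out by optimizing the limiting value of the base, but $e$ is the clean choice compatible with $\beta=1$ in Lemma \ref{Lem4:Inequality4}.
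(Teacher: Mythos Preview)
Your proof is correct and follows essentially the same route as the paper: test \eqref{Eq1:Regularity} with the linear radial cut-offs, derive the iteration inequality, invoke Lemma \ref{Lem4:Inequality4} with $\alpha=(p-1)C$, $\delta=2-\e$, $\gamma=2$, $\beta=1$, and then calibrate $h=\theta^{-1}R^{1-\e/2}$ to force the base to be at least $e$ and the exponent to grow like $\theta R^{\e/2}$. The only cosmetic difference is that the paper treats $\e=2$ and $\e\in(0,2)$ separately (and contains the harmless misprint ``$\gamma=\e$'' where $\gamma=2$ is meant), whereas you handle both at once via the single choice $h(R)=\theta^{-1}R^{1-\e/2}$.
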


\begin{remark}[A Liouville-type theorem]
It clearly follows that the unique nonpositive $L^p\loc$ distributional solution to $-\Delta u +\lambda u\geq 0$ that satisfies condition \eqref{Eq1:LpPPP} is the null function. In this sense, Theorem \ref{Thm1:LpPPP} can be read as an $L^p$ Liouville-type theorem.
\end{remark}

\begin{remark}
The case $\e>2$ can be considered by reducing the problem to the case $\e=2$, since
\begin{align*}
\lambda(x)\geq C(1+d^M(x,o))^{\e-2}\geq C \quad \forall x\in M.
\end{align*}
\end{remark}

\begin{proof}
Let $u\in \lpl{p}{M}$ be a distributional solution to $-\Delta u + \lambda u \geq 0$ satisfying \eqref{Eq1:LpPPP}. For any fixed $a>0$ and $b>a$, consider the function $\eta_{a,b} \in C^{0,1}([0,+\infty))$ so that
\begin{align*}
\system{ll}{\eta_{a,b} \equiv 1 & \inn [0,a]\\  \eta_{a,b}(t)=\frac{b-t}{b-a} & \inn [a,b]\\ \eta_{a,b} \equiv 0 & \inn [b,+\infty).}
\end{align*}
In particular, $|\eta_{a,b}'(t)|\leq \frac{1}{b-a}$ almost everywhere in $[0,+\infty)$.

Set $\varphi_{a,b}(x):= \eta_{a,b}(d(x,o))$, where $d(\cdot,\cdot)$ is the intrinsic distance on $M$. Then, $\varphi_{a,b}\in \ccomp{0,1}{M}$ and satisfies
\begin{align*}
\system{ll}{\varphi_{a,b} \geq 0 & \inn M\\ |\nabla \varphi_{a,b}(x)| \leq \frac{1}{b-a} & \textnormal{a.e.}\ \inn M \\ \varphi_{a,b}\equiv 0 & \inn M\setminus \overline{B_b(o)} \\ \varphi_{a,b}\equiv 1 & \inn B_a(o).}
\end{align*}
Using $\varphi=\varphi_{a,b}$ in \eqref{Eq1:Regularity}, we get
\begin{align*}
\frac{1}{(b-a)^2} \int_{B_b(o)\setminus B_a(o)} (u^-)^p \dvol &\geq (p-1)\int_{B_a(o)} \lambda(u^-)^p \dvol\\
&\geq (p-1) \int_{B_a(o)} \frac{C}{(1+d^M(\cdot,o))^{2-\e}} (u^-)^p \dvol\\
&\geq (p-1) \frac{C}{(1+a)^{2-\e}} \int_{B_a(o)} (u^-)^p \dvol
\end{align*}
and, by summing
\begin{align*}
\frac{1}{(b-a)^2}\int_{B_a(o)} (u^-)^p \dvol
\end{align*}
to both sides of previous inequality, we obtain
\begin{align*}
\left((p-1)  \frac{C}{(1+a)^{2-\e}} + \frac{1}{(b-a)^2} \right)& \int_{B_a(o)} (u^-)^p \dvol \leq  \frac{1}{(b-a)^2} \int_{B_b(o)} (u^-)^p \dvol
\end{align*}
for every fixed $a>0$ and $b>a$. In particular, it implies that
\begin{align}\label{App1:LpPPP}
\begin{split}
\int_{B_a(o)} (u^-)^p \dvol & \leq \frac{1}{ (p-1) C(1+a)^{\e-2} h^2 + 1} \int_{B_{a+h}(o)} (u^-)^p \dvol
\end{split}
\end{align}
for every $a>0$ and $h> 0$.

If we suppose that $u^- \neq 0$, then there exists $A>0$ so that
\begin{align*}
\int_{B_A(o)} (u^-)^p \dvol >0.
\end{align*}
By \eqref{App1:LpPPP} we can apply Lemma \ref{Lem4:Inequality4} to
\begin{align*}
f:a\mapsto \int_{B_a(o)} (u^-)^p \dvol
\end{align*}
in $[A,+\infty)$, with $\gamma=\e$, $\delta=2-\e$, $\alpha=(p-1)C$ and $\beta=1$ and we get that for any $h> 0$ and for any $R>A+h$ the function $f$ satisfies
\begin{align*}
f(R)\geq f(A)\Big((p-1) C  (1+R-h)^{\e-2} h^2 +1\Big)^{\frac{R-A}{h}-1}.
\end{align*}
If $0<\e<2$ we can take $h= R^{1-\frac{\e}{2}}\sqrt{\frac{e-1}{(p-1)C}}$, obtaining
\begin{align*}
f(R)&\geq f(A) \left((p-1)C  \frac{h^2}{(1+R-h)^{2-\e}}+1 \right)^{\frac{R-A}{h}-1}\\
& \geq f(A) \left((p-1)C  \frac{h^2}{(h+R-h)^{2-\e}}+1 \right)^{\frac{R-A}{h}-1}\\
& = f(A)  \left((p-1)C  \frac{h^2}{R^{2-\e}}+1 \right)^{\frac{R-A}{h}-1}\\
& = f(A) e^{-\frac{A}{h}-1} e^{\frac{R}{h}}\\
& \geq  \frac{f(A)e^{-1}}{2} e^{\theta R^{\frac{\e}{2}}}
\end{align*}
for every $R$ big enough so that
\begin{align*}
R>A+h, \quad h\geq 1 \quad \textnormal{and} \quad e^{-\frac{A}{h}}\geq \frac{1}{2}.
\end{align*}
Similarly, if $\e=2$ we can choose $h=\sqrt{\frac{e-1}{(p-1)C}}$, in order to get
\begin{align*}
f(R)&\geq f(A) \left((p-1) C h^2 + 1 \right)^{\frac{R-A}{h}-1}\\
&= f(A) e^{-\theta A-1} e^{\theta R}.
\end{align*}
In both cases we obtain a contradiction to \eqref{Eq1:LpPPP}, implying that $u^-=0$ almost everywhere, i.e. the claim.
\end{proof}

\begin{remark}
In the paper \cite{MARI2010665} by L. Mari, M. Rigoli and A.G. Setti, using the viewpoint of maximum principles at infinity for the $\varphi$-Laplacian, the authors proved a general \textit{a priori} estimate that, in our setting, reduces as follow.

\begin{theorem}[{{\cite[Theorem B]{MARI2010665}}}]\label{thm:MRS}
Let $(M,g)$ be a complete Riemannian manifold and $\lambda\in C(M)$ be a positive function satisfying
\begin{align*}
\lambda(x) \geq \frac{B}{r(x)^{2-\e}} \quad in\ M\setminus B_{R_0}(o)
\end{align*}
for some $\e \in (0,+\infty), B>0, R_0>0$ and $o\in M$.

Let $\sigma \geq 0$ and $u\in C^1(M)$ be a distributional solution to
\begin{align*}
-\Delta u + \lambda u \geq 0 \quad in\ M
\end{align*}
so that either $u^-(x)=o(r(x)^\sigma)$ as $r(x)\to +\infty$, if $\sigma>0$, or $u$ is bounded from below, if $\sigma=0$. Lastly, assume
\begin{align*}
\liminf_{r\to +\infty} \frac{\ln |B_r(o)|}{r^{\e-\sigma}}<+\infty \quad if\ \sigma<\e
\end{align*}
or
\begin{align*}
\liminf_{r\to + \infty} \frac{\ln |B_r(o)|}{\ln r}<+\infty \quad if\ \sigma=\e.
\end{align*}
Then, $u\geq 0$.
\end{theorem}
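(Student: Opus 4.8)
The plan is to recast the statement as a Liouville property for the nonnegative function $v:=u^-$ and then to feed it into the integral machinery of Sections~\ref{Sec:PreliminaryResults}--\ref{Sec:Lploc}. Since $(M,g)$ is complete and $0\le\lambda\in C(M)\subseteq L^\infty\loc(M)$, Proposition~\ref{Prop1:Regularity} applies verbatim to $u$: for every $p\in(1,+\infty)$ one has $u^-\in L^\infty\loc(M)$, $(u^-)^{p/2}\in W^{1,2}\loc(M)$, and
\begin{align*}
(p-1)\int_M \lambda\, v^{p}\,\varphi^{2}\dvol \;\le\; \int_M v^{p}\,|\nabla\varphi|^{2}\dvol
\qquad\text{for every }0\le\varphi\in\ccomp{0,1}{M}.
\end{align*}
Moreover $v$ inherits the growth control, $v(x)=o(r(x)^{\sigma})$ as $r(x)\to+\infty$ (resp. $v$ bounded if $\sigma=0$), and the target becomes $v\equiv 0$.

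Next I would argue by contradiction: if $v\not\equiv 0$, choose $A\ge R_0$ with $f(A)>0$, where $f(a):=\int_{B_a(o)}v^{p}\dvol$. Plugging into the inequality above the lower bound $\lambda(x)\ge B\,(1+r(x))^{-(2-\e)}$ (legitimate on all of $M$ up to shrinking $B$, since $\lambda$ is continuous and positive; when $\e>2$ one simply uses $\lambda\gtrsim 1$) together with the radial Lipschitz cut-offs $\varphi_{a,a+h}$ from the proof of Theorem~\ref{Thm1:LpPPP}, one obtains exactly the recursion \eqref{App1:LpPPP},
\begin{align*}
f(a)\;\le\;\frac{1}{(p-1)B\,(1+a)^{\e-2}h^{2}+1}\,f(a+h),\qquad a\ge A,\ h>0 ,
\end{align*}
and Lemma~\ref{Lem4:Inequality4} then forces $f(R)$ to grow at least like $e^{\theta R^{\e/2}}$ when $0<\e<2$, like $e^{\theta R}$ when $\e=2$, and at least exponentially when $\e>2$. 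Comparing this with $f(R)\le\big(\sup_{B_R(o)}v\big)^{p}\,|B_R(o)|=o(R^{p\sigma})\,|B_R(o)|$ and the volume hypothesis produces the desired contradiction whenever the growth forced from below beats the growth allowed from above: this is automatic when $\sigma=\e$ (where $|B_R(o)|$ is merely polynomial) and, more generally, whenever $\sigma$ is close enough to $\e$ that $r^{\e-\sigma}$ is dominated by $r^{\e/2}$ (resp. by $r$).

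The genuine obstacle is to cover the \emph{whole} range $\sigma\in[0,\e]$ with the sharp requirement $\liminf_{r\to+\infty}\ln|B_r(o)|/r^{\e-\sigma}<+\infty$, imposed only along a sequence of radii; in particular the case $\sigma=0$, $u$ merely bounded below, is not reached by the crude linear cut-off, which is simply too wasteful. To get the optimal threshold one must tailor the profile of $\varphi$ (equivalently, a radial comparison function) to the volume growth: let $\varphi$ decay over a long annulus out to a radius $R_k$ realizing the $\liminf$, and optimise its profile against the weight $r^{-(2-\e)}$ and the bound $v=o(r^{\sigma})$ so that $\int_M v^{p}|\nabla\varphi|^{2}\dvol\to 0$ while $\int_M\lambda v^{p}\varphi^{2}\dvol$ stays bounded away from $0$. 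This is exactly the content of the weak maximum principle at infinity of Pigola--Rigoli--Setti: under the stated volume growth, $M$ satisfies the ``$r^{\sigma}$-weak maximum principle'' for the operator $\Delta-\lambda$, whence from $\Delta v\ge\lambda v$ one extracts points $x_k$ with $r(x_k)\to+\infty$, $\lambda(x_k)v(x_k)$ approaching $\sup_M(\lambda v)$, and $\Delta v(x_k)\le 1/k$ in the appropriate distributional sense -- incompatible with $\Delta v\ge\lambda v$ unless $v\equiv 0$. The hard point is precisely this sharp sufficiency: since the area $\mathcal{A}(t)$ of $\partial B_t(o)$ is a priori uncontrolled (only $|B_t(o)|=\int_0^t\mathcal{A}(s)\,ds$ is), the optimisation of the cut-off cannot be carried out pointwise in $t$ but only at the level of the volume function, via a dyadic/capacitary decomposition together with the freedom in the choice of the exhausting sequence $R_k$.
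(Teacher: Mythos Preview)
This theorem is not proved in the paper: it is quoted from \cite{MARI2010665} inside a remark, solely to contrast its hypotheses with those of Theorem~\ref{Thm1:LpPPP}. There is therefore no ``paper's own proof'' to compare against; the surrounding remark in fact explains that the two results are incomparable in strength.

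Your proposal is really two things spliced together. The first half---Proposition~\ref{Prop1:Regularity} plus the iteration of Lemma~\ref{Lem4:Inequality4} with the linear cut-offs $\varphi_{a,a+h}$---is correct as far as it goes, but, as you yourself observe, it only recovers the sub-range $\sigma\ge\e/2$: the recursion forces $f(R)\gtrsim e^{\theta R^{\e/2}}$, whereas the pointwise bound on $v$ together with the volume hypothesis cap $f(R)$ by $o(R^{p\sigma})\,e^{CR^{\e-\sigma}}$ along the favourable sequence of radii, and these collide only when $\e-\sigma\le\e/2$. The range $\sigma<\e/2$, and in particular the important case $\sigma=0$ (merely bounded-below $u$), is genuinely out of reach of this cut-off. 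The second half of your proposal is a correct diagnosis---that reaching the sharp exponent $\e-\sigma$ requires a capacity-tailored test profile and ultimately the weak-maximum-principle-at-infinity machinery of Mari--Rigoli--Setti---but it is a description of what \cite{MARI2010665} does, not an argument: the last paragraph is a plan, and the invocation of a ``distributional'' Omori--Yau sequence for a $C^1$ subsolution would have to be made precise. In sum, the proposal proves a strict sub-case of the cited theorem and otherwise defers back to the reference, which is all the present paper does as well.
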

This result compares with our Theorem \ref{Thm1:LpPPP}. Indeed, on the one hand, if we assume the pointwise control $u^-(x)=o(r^\sigma(x))$, for $0<\sigma<\e$, condition \eqref{Eq1:LpPPP} is satisfied provided $|B_R|=O(R^{-p\sigma}e^{\theta R^\frac{\e}{2}})$, $p \in (1,+\infty)$, while Theorem \ref{thm:MRS} requires the volume growth $|B_R|=O(e^{R^{\e-\sigma}})$.

On the other hand, our Theorem \ref{Thm1:LpPPP} improves Theorem \ref{thm:MRS} in two aspects. First of all, we require less regularity on the functions $u$ and $\lambda$. Indeed, we only need $L\loc^p$ solutions with $L^\infty\loc$ potentials in order to use the Kato inequality and the regularity result claimed in Section \ref{Sec:PreliminaryResults}. Secondly, we only need an $L^p$-bound on the asymptotic growth of $u^-$, instead of a pointwise asymptotic control. This allows us to consider a wider class of functions, for example having a super-quadratic growth, even in the case $\e<2$.

\end{remark}

In the particular case where $\e =2$, for instance when $\lambda$ is a constant, we get the next version of Theorem \ref{Thm1:LpPPP}.

\begin{corollary}\label{Cor0:LpPPP}
Let $(M,g)$ be a complete Riemannian manifold, $\lambda\in \lpl{\infty}{M}$ so that $\lambda \geq C$ for a positive constant $C$ and $p\in (1,+\infty)$.

If $u\in \lpl{p}{M}$ satisfies $-\Delta u + \lambda u \geq 0$ in the sense of distributions and
\begin{align}\label{Eq1:Cor0:LpPPP}
\int_{B_R} (u^-)^p \dvol = o(e^{\theta R}) \quad as\ R\to +\infty
\end{align}
with $\theta=\sqrt{\frac{(p-1)C}{e-1}}$, then $u\geq 0$ in $M$.
\end{corollary}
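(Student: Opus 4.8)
The plan is to deduce this directly from Theorem \ref{Thm1:LpPPP} by specializing to the borderline exponent $\e = 2$. First I would observe that the standing hypothesis $\lambda \geq C$ on all of $M$ is exactly the pointwise lower bound required in Theorem \ref{Thm1:LpPPP},
\[
\lambda(x) \geq \frac{C}{(1+d^M(x,o))^{2-\e}}, \qquad \forall x \in M,
\]
in the case $\e = 2$, for any choice of reference point $o \in M$: indeed $2 - \e = 0$ and the denominator reduces to $1$. Hence all the structural assumptions of Theorem \ref{Thm1:LpPPP} are satisfied with this value of $\e$ and this same constant $C$, the function $\lambda \in \lpl{\infty}{M}$ being positive by hypothesis.

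Next, with $\e = 2$ one has $\frac{\e}{2} = 1$, so that the sub-exponential growth condition \eqref{Eq1:LpPPP} demanded by the theorem, namely
\[
\int_{B_R(o)} (u^-)^p \dvol = o\!\left(e^{\theta R^{\frac{\e}{2}}}\right) \quad \text{as } R \to +\infty
\]
with $\theta = \sqrt{\frac{(p-1)C}{e-1}}$, coincides verbatim with the hypothesis \eqref{Eq1:Cor0:LpPPP}. Applying Theorem \ref{Thm1:LpPPP} then gives $u \geq 0$ in $M$, which is the assertion.

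The only point worth a comment — and it is not really an obstacle — is that the endpoint $\e = 2$ is genuinely included in the range $(0,2]$ covered by Theorem \ref{Thm1:LpPPP}; inspecting its proof, this is precisely the branch in which one picks the constant step $h = \sqrt{\frac{e-1}{(p-1)C}}$ and reaches $f(R) \geq f(A)\, e^{-\theta A - 1}\, e^{\theta R}$ for the function $f(a) = \int_{B_a(o)} (u^-)^p \dvol$, contradicting \eqref{Eq1:Cor0:LpPPP} unless $u^- \equiv 0$ almost everywhere. Thus no argument beyond quoting Theorem \ref{Thm1:LpPPP} is needed, and I would simply present the corollary as that specialization.
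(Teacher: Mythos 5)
Your proof is correct and matches the paper's own treatment: Corollary \ref{Cor0:LpPPP} is obtained there precisely by specializing Theorem \ref{Thm1:LpPPP} to the borderline case $\e = 2$, exactly as you do. Nothing further is required.
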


\begin{remark}
 Corollary \ref{Cor0:LpPPP} improves very much one of the main results of \cite{pigola2023approximation} in the setting of complete manifolds. Indeed, in that paper, the $L^{p}_{loc}$ positivity preservation is obtained under the condition $\int_{B_{R}(o)}{(u^{-})}^{p} \dvol = o(R^{2})$. See \cite[Corollary 5.2 and Remark 5.3]{pigola2023approximation}.
\end{remark}

As a byproduct, by applying Corollary \ref{Cor0:LpPPP} to both the functions $u$ and $-u$, we get an uniqueness statement for $L^p\loc$ solutions to $-\Delta u+\lambda u=0$.

\begin{corollary}[Uniqueness]\label{Cor2:Uniqueness}
Let $(M,g)$ be a complete Riemannian manifold, $\lambda\in \lpl{\infty}{M}$ so that $\lambda\geq C$ for a positive constant $C$ and $p\in (1,+\infty)$.

If $u\in \lpl{p}{M}$ satisfies $-\Delta u+\lambda u = 0$ in the sense of distributions and
\begin{align*}
\int_{B_R} (u^\pm)^p \dvol = o(e^{\theta R}) \quad as\ R\to +\infty
\end{align*}
with $\theta=\sqrt{\frac{(p-1)\lambda}{e-1}}$, then $u=0$ almost everywhere in $M$.
\end{corollary}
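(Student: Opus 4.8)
The plan is to obtain this statement as an immediate two-fold application of Corollary \ref{Cor0:LpPPP}, once to $u$ and once to $-u$, with no new estimates required.

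First I would observe that a distributional solution of the equation $-\Delta u + \lambda u = 0$ is, a fortiori, a distributional solution of the inequality $-\Delta u + \lambda u \geq 0$: indeed $\int_M u(-\Delta\varphi + \lambda\varphi)\dvol = 0 \geq 0$ for every $0 \leq \varphi \in \ccomp{\infty}{M}$. Since moreover $u \in \lpl{p}{M}$, $\lambda \in \lpl{\infty}{M}$ with $\lambda \geq C > 0$, and the hypothesis supplies $\int_{B_R}(u^-)^p \dvol = o(e^{\theta R})$ as $R \to +\infty$ with $\theta = \sqrt{(p-1)C/(e-1)}$, all the assumptions of Corollary \ref{Cor0:LpPPP} are in force; hence $u \geq 0$ almost everywhere in $M$.

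Next I would repeat the argument with $v := -u$. Again $v \in \lpl{p}{M}$, and from $\int_M u(-\Delta\varphi + \lambda\varphi)\dvol = 0$ we get $\int_M v(-\Delta\varphi + \lambda\varphi)\dvol = 0 \geq 0$ for every $0 \leq \varphi \in \ccomp{\infty}{M}$, so $v$ solves $-\Delta v + \lambda v \geq 0$ in the sense of distributions. The only point that needs a word is the bookkeeping on the negative parts: since $v^- = (-u)^- = u^+$, the second growth hypothesis $\int_{B_R}(u^+)^p \dvol = o(e^{\theta R})$ is exactly condition \eqref{Eq1:Cor0:LpPPP} written for $v$, and the constant $\theta$ is unchanged because it depends only on $p$ and $C$. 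Applying Corollary \ref{Cor0:LpPPP} to $v$ therefore yields $v \geq 0$ a.e., i.e. $u \leq 0$ almost everywhere in $M$. Combining the two inequalities gives $u = 0$ almost everywhere.

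There is essentially no obstacle in this proof; the only thing to be careful about is verifying that the reduction to $-u$ is legitimate — namely that $(-u)^- = u^+$ and that $-u$ still satisfies both the distributional differential inequality and the relevant sub-exponential growth bound with the same $\theta$ — all of which is immediate.
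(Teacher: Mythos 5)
Your proof is correct and coincides with the paper's own argument: the paper introduces Corollary \ref{Cor2:Uniqueness} precisely ``by applying Corollary \ref{Cor0:LpPPP} to both the functions $u$ and $-u$,'' which is exactly what you do, including the observation that $(-u)^- = u^+$ so that the two growth hypotheses match the ones required by Corollary \ref{Cor0:LpPPP}.
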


\begin{remark}
As already observed at the beginning of this section, for every $\lambda>0$ the function $u(x)=-e^{\sqrt{\lambda}x}$ provides a counterexample to the $\lpl{p}{\rr}$ positivity preserving property, for any $p\in (1,+\infty)$. Moreover, we stress that its $p$-norm has the following asymptotic growth
\begin{align*}
\int_{-R}^R (u^-)^p(x) \dint{x}=O(e^{p\sqrt{\lambda}R})
\end{align*}
with $p\sqrt{\lambda}>\sqrt{\frac{(p-1)\lambda}{e-1}}$. Therefore, Theorem \ref{Thm1:LpPPP} and Corollary \ref{Cor0:LpPPP} are not far from being sharp. It would be very interesting to understand to what extent this exponent can be refined.
\end{remark}


\section{$L^1\loc$ positivity preserving property}
\label{Sec:L1loc}

The approach used in Section \ref{Sec:Lploc}, which is based on inequality \eqref{Eq1:Regularity}, is clearly not applicable for $p=1$. To overcome this problem, we resort to some special cut-off to be used as test functions in the distributional inequality satisfied by $u$. The existence of these functions is ensured, for instance, by requiring certain conditions on the decay of the Ricci curvature.

\subsection{Cut-off functions with decaying laplacians}

The first theorem we present in this section is based on the following iterative lemma. It is an analogue of the Lemma \ref{Lem4:Inequality4} for the case $p=1$.

\begin{lemma}\label{Lem3:Inequality3}
Let $A>0$ and $f:[A,+\infty)\to (0,+\infty)$ be a nondecreasing function. Suppose there exist $\sigma>1, \gamma>0, \alpha>0$ and $\beta\geq 1$ so that
\begin{align}\label{Eq1:Inequality3}
f(r)\leq \frac{1}{\alpha r^\gamma+\beta} f(\sigma r)
\end{align}
for every $r\geq A$. Then, $f$ satisfies
\begin{align*}
f(R)\geq \left(\frac{R}{A}\right)^{\log_\sigma(\alpha A^\gamma +\beta)} \frac{f(A)}{\alpha A^\gamma+\beta}
\end{align*}
for every $R>A$.
\end{lemma}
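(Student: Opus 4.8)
The structure mirrors the proof of Lemma \ref{Lem4:Inequality4}: iterate the multiplicative inequality \eqref{Eq1:Inequality3} along the geometric sequence $A, \sigma A, \sigma^2 A, \dots$, then interpolate to a general $R > A$ using monotonicity of $f$. The only genuine differences are that the step is multiplicative (factor $\sigma$) rather than additive, and that the coefficient $\alpha r^\gamma + \beta$ now \emph{grows} with $r$ rather than decaying, which only helps us (each successive factor is at least as large as the first).

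\textbf{Step 1: iterate along $\sigma^n A$.} Fix $r = A$. Applying \eqref{Eq1:Inequality3} repeatedly and using that $r \mapsto \alpha r^\gamma + \beta$ is nondecreasing (so $\alpha(\sigma^k A)^\gamma + \beta \geq \alpha A^\gamma + \beta$ for every $k \geq 0$), one gets
\begin{align*}
f(A) \leq \frac{1}{\alpha A^\gamma + \beta}\, f(\sigma A) \leq \frac{1}{(\alpha A^\gamma+\beta)^2}\, f(\sigma^2 A) \leq \cdots \leq \frac{1}{(\alpha A^\gamma+\beta)^n}\, f(\sigma^n A),
\end{align*}
hence $f(\sigma^n A) \geq (\alpha A^\gamma + \beta)^n f(A)$ for every $n \in \nn$.

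\textbf{Step 2: interpolate to general $R$.} Given $R > A$, let $n = n(R, A, \sigma)$ be the unique nonnegative integer with $\sigma^{n} A \leq R < \sigma^{n+1} A$, i.e. $n \leq \log_\sigma(R/A) < n+1$, equivalently $n > \log_\sigma(R/A) - 1$. Since $f$ is nondecreasing and $\alpha A^\gamma + \beta \geq 1$ (so raising it to a larger exponent only increases it), Step 1 gives
\begin{align*}
f(R) \geq f(\sigma^n A) \geq (\alpha A^\gamma + \beta)^n f(A) \geq (\alpha A^\gamma + \beta)^{\log_\sigma(R/A) - 1} f(A) = \left(\frac{R}{A}\right)^{\log_\sigma(\alpha A^\gamma + \beta)} \frac{f(A)}{\alpha A^\gamma + \beta},
\end{align*}
where the last equality uses $(\alpha A^\gamma+\beta)^{\log_\sigma(R/A)} = \left(\sigma^{\log_\sigma(\alpha A^\gamma+\beta)}\right)^{\log_\sigma(R/A)} = (R/A)^{\log_\sigma(\alpha A^\gamma+\beta)}$. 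This is exactly the claimed bound.

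\textbf{Main obstacle.} There is no real obstacle here — the argument is routine once one notices the two sign facts used above ($\alpha r^\gamma+\beta$ nondecreasing in $r$, and $\beta \geq 1$ so the base exceeds $1$). The only point requiring a moment's care is the bookkeeping in Step 2: one must use $n > \log_\sigma(R/A) - 1$ \emph{together with} $\alpha A^\gamma + \beta \geq 1$ to legitimately replace the integer exponent $n$ by the real exponent $\log_\sigma(R/A) - 1$ without reversing the inequality. If instead one had $\beta < 1$ the direction would flip and a different estimate would be needed, so the hypothesis $\beta \geq 1$ is used essentially, just as in Lemma \ref{Lem4:Inequality4}.
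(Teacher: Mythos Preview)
Your proof is correct and follows essentially the same route as the paper's: iterate \eqref{Eq1:Inequality3} along the geometric sequence $A,\sigma A,\sigma^2 A,\dots$, pick the integer $n$ with $\sigma^n A \le R < \sigma^{n+1}A$, and use monotonicity of $f$ together with $\alpha A^\gamma+\beta\ge 1$ to pass from the integer exponent $n$ to $\log_\sigma(R/A)-1$. Your presentation is in fact slightly cleaner, since you make explicit where the hypothesis $\beta\ge 1$ enters (the paper uses it tacitly at the same step).
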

\begin{proof}
Having fixed $R\geq A$, we have
\begin{align*}
f(R)&\leq (\alpha R^\gamma +\beta)^{-1} f(\sigma R)\\
& \leq (\alpha R^\gamma +\beta)^{-1} (\alpha (\sigma R)^\gamma +\beta)^{-1} f(\sigma^2 R)\\
&\leq (\alpha R^\gamma +\beta)^{-2} f(\sigma^2 R)
\end{align*}
and, iterating,
\begin{align*}
f(R)\leq (\alpha R^\gamma +\beta)^{-n} f(\sigma^n R)
\end{align*}
for every $n\in \mathbb{N}$.

Now consider $n\in \mathbb{N}$ so that $\sigma^{n+1} A\geq R \geq \sigma^n A$. In particular, from
\begin{align*}
\sigma^{n+1} A\geq R \quad \Rightarrow \quad n \geq \log_\sigma \left(\frac{R}{A}\right)-1
\end{align*}
we deduce
\begin{align*}
f(R)&\geq f(\sigma^n A) \geq (\alpha A^\gamma +\beta)^n f(A)\\
& \geq (\alpha A^\gamma +\beta)^{\log_\sigma\left(\frac{R}{A}\right)} \frac{f(A)}{\alpha A^\gamma +\beta}\\
& = \left(\frac{R}{A}\right)^{\log_\sigma(\alpha A^\gamma+\beta)} \frac{f(A)}{\alpha A^\gamma +\beta}
\end{align*}
as claimed
\end{proof}

As a consequence, by requiring the existence of a family $\{\phi_R\}_R$ of cut-off functions whose laplacians decay as $|\Delta \phi_R|\leq CR^{-\gamma}$ for a positive constant $\gamma$, we get

\begin{theorem}[Generalized $L^1\loc$ positivity preserving property]\label{Thm2:L1PPP}
Let $(M,g)$ be a complete Riemannian manifold and $\lambda$ a positive constant. Assume that for a fixed $o\in M$ there exist some positive constants $\gamma$ and $R_0$ and a constant $\sigma> 1$ satisfying the following condition: for every $R>R_0$ there exists $\phi_R\in \ccomp{2}{M}$ such that 
\begin{align}\label{Eq0:L1PPP}
\system{ll}{0\leq \phi_R \leq 1 & \inn M\\ \phi_R\equiv 1 & \inn B_R(o) \\ \textnormal{supp}( \phi_R) \subset B_{\sigma R}(o) \\ |\Delta \phi_R|\leq \frac{C}{R^\gamma} & \inn M}
\end{align}
where $C=C(\sigma)>0$ is a constant not depending on $R$.
If $u\in \lpl{1}{M}$ satisfies $-\Delta u+\lambda u \geq 0$ in the sense of distributions and there exists $k\in \mathbb{N}$ so that
\begin{align}\label{Eq1:L1PPP}
\int_{B_R(o)} u^- \dvol= O(R^k) \quad as\ R\to +\infty,
\end{align}
then $u\geq 0$ almost everywhere in $M$.
\end{theorem}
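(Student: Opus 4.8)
The strategy mirrors the $p>1$ argument of Theorem~\ref{Thm1:LpPPP}, but since inequality \eqref{Eq1:Regularity} is unavailable for $p=1$, the idea is to exploit directly the distributional inequality $-\Delta u + \lambda u \geq 0$ satisfied by $u$, using the Laplacian cut-offs $\phi_R$ as test functions. By the Brezis--Kato inequality, $u^-$ satisfies $\Delta u^- \geq \lambda u^-$ in the sense of distributions, and by the regularity theory (Proposition~\ref{Prop1:Regularity}, or rather its underlying result from \cite{pigola2021p}), $u^- \in L^\infty\loc(M)$; in particular $u^-$ is a legitimate object to pair against $\Delta \phi_R$. The key point is that $\phi_R \in \ccomp{2}{M}$ is \emph{not} merely Lipschitz, so we may integrate by parts twice and test the distributional inequality against $\phi_R$ itself (approximating by nonnegative $C^\infty_c$ functions if needed), obtaining
\begin{align*}
\lambda \int_M u^- \phi_R \dvol \leq \int_M u^- \Delta \phi_R \dvol \leq \int_{B_{\sigma R}(o)\setminus B_R(o)} u^- |\Delta \phi_R| \dvol \leq \frac{C}{R^\gamma}\int_{B_{\sigma R}(o)} u^- \dvol,
\end{align*}
where we used $0 \leq \phi_R \leq 1$, $\phi_R \equiv 1$ on $B_R(o)$, $\Delta\phi_R \equiv 0$ on $B_R(o)$ (which follows from $\phi_R \equiv 1$ there), and $\operatorname{supp}(\phi_R)\subset B_{\sigma R}(o)$.

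From this, since $\phi_R \equiv 1$ on $B_R(o)$ and $u^- \geq 0$, I get the iteration inequality
\begin{align*}
\int_{B_R(o)} u^- \dvol \leq \frac{1}{\lambda} \cdot \frac{C}{R^\gamma}\int_{B_{\sigma R}(o)} u^- \dvol = \frac{C/\lambda}{R^\gamma}\int_{B_{\sigma R}(o)} u^- \dvol,
\end{align*}
valid for all $R > R_0$. Setting $f(R) := \int_{B_R(o)} u^- \dvol$, this reads $f(R) \leq \frac{1}{(\lambda/C) R^\gamma} f(\sigma R)$. To apply Lemma~\ref{Lem3:Inequality3} I need the denominator of the form $\alpha R^\gamma + \beta$ with $\beta \geq 1$; I simply note that $\frac{\lambda}{C} R^\gamma \geq \frac{\lambda}{C} R^\gamma + 0$ is not quite in that form, so instead I add $f(R)$ to a fraction of both sides, or more simply observe that for $R$ large $\frac{\lambda}{C}R^\gamma \geq 1$ and rewrite; cleanly, one can enlarge $A$ beyond $R_0$ so that $(\lambda/C)A^\gamma \geq 2$ and then use $(\lambda/C)R^\gamma = \frac{1}{2}(\lambda/C)R^\gamma + \frac{1}{2}(\lambda/C)R^\gamma \geq \frac{1}{2}(\lambda/C)R^\gamma + 1$ for $R \geq A$, yielding $f(R) \leq \big(\alpha R^\gamma + 1\big)^{-1} f(\sigma R)$ with $\alpha = \frac{\lambda}{2C}$, $\beta = 1$.

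Now I run the dichotomy: if $u^- \not\equiv 0$, then by monotonicity of $f$ there is $A > R_0$ (enlarged as above) with $f(A) > 0$, and $f$ is a positive nondecreasing function on $[A,+\infty)$ satisfying \eqref{Eq1:Inequality3}. Lemma~\ref{Lem3:Inequality3} then gives
\begin{align*}
f(R) \geq \left(\frac{R}{A}\right)^{\log_\sigma(\alpha A^\gamma + 1)} \frac{f(A)}{\alpha A^\gamma + 1}
\end{align*}
for all $R > A$, i.e.\ $f(R)$ grows at least like a fixed positive power of $R$; crucially, since $\alpha A^\gamma + 1$ can be made as large as we like by taking $A$ large (as $\gamma > 0$), the exponent $\log_\sigma(\alpha A^\gamma + 1)$ exceeds any prescribed $k \in \mathbb{N}$. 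This contradicts the growth hypothesis \eqref{Eq1:L1PPP} that $f(R) = O(R^k)$. Hence $u^- = 0$ almost everywhere, i.e.\ $u \geq 0$.

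\textbf{Main obstacle.} The delicate point is the justification of testing the distributional inequality against $\phi_R$ and performing the double integration by parts: one must check that $\phi_R$, being $C^2$ with compact support, can be approximated in a suitable topology by nonnegative $C^\infty_c$ functions so that the defining inequality $\int_M u^-(-\Delta\varphi + \lambda\varphi)\dvol \geq 0$ passes to the limit — this uses $u^- \in L^\infty\loc$ together with the local $L^1$ convergence of the test functions and their Laplacians, which is where the $C^2$ (rather than merely Lipschitz) regularity of the Laplacian cut-offs is essential. A secondary but routine point is arranging the algebraic form $\alpha R^\gamma + \beta$ with $\beta \geq 1$ required by Lemma~\ref{Lem3:Inequality3}, handled by enlarging $A$ as indicated above.
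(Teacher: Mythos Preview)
Your proposal is correct and follows essentially the same approach as the paper: apply Brezis--Kato to pass to $u^-$, test the distributional inequality against the Laplacian cut-offs $\phi_R$, derive an iteration of the form $f(R)\le (\alpha R^\gamma+\beta)^{-1}f(\sigma R)$, and then invoke Lemma~\ref{Lem3:Inequality3} with $A$ large enough that the exponent $\log_\sigma(\alpha A^\gamma+\beta)$ exceeds the prescribed $k$. The only cosmetic difference is in obtaining $\beta=1$: the paper adds $\frac{C}{R^\gamma}\int_{B_R(o)}u^-\dvol$ to both sides of the annular inequality (yielding $\alpha=\lambda/C$, $\beta=1$ directly), while you instead enlarge $A$ so that $(\lambda/C)A^\gamma\ge 2$ and split the coefficient --- both manipulations are valid and lead to the same contradiction.
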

\begin{proof}
Fix $u\in \lpl{1}{M}$ a distributional solution to $-\Delta u+ \lambda u \geq 0$ that satisfies condition \eqref{Eq1:L1PPP} for a certain $k\in \mathbb{N}$. By Brezis-Kato inequality $\Delta u^- \geq \lambda u^-$ in the sense of distributions, implying
\begin{align*}
\lambda \int_M u^- \phi_R \dvol \leq \int_M u^- \Delta \phi_R \dvol \quad \quad \forall R>R_0.
\end{align*}
Using the definition of $\phi_R$, we get
\begin{align*}
\lambda \int_{B_R(o)} u^- \phi_R \dvol \leq \frac{C}{R^\gamma} \int_{B_{\sigma R}(o)\setminus B_R(o)} u^- \dvol \quad \quad \forall R>R_0
\end{align*}
and, by summing up
\begin{align*}
\frac{C}{R^\gamma} \int_{B_R(o)} u^- \dvol
\end{align*}
to both sides of the previous inequality, we obtain
\begin{equation}\label{App1:L1PPP}
\begin{split}
\int_{B_R(o)} u^- \dvol &\leq \frac{C}{\lambda R^\gamma +C} \int_{B_{\sigma R}(o)} u^- \dvol\\
&= \frac{1}{\alpha R^\gamma +1} \int_{B_{\sigma R}(o)} u^- \dvol \quad \quad \forall R>R_0,
\end{split}
\end{equation}
where $\alpha=\frac{\lambda}{C}$ depends on $\sigma$. \quad Similarly to what we done in Theorem \ref{Thm1:LpPPP}, if we suppose that $u^-\neq 0$ almost everywhere in $M$, then there exists $A\geq R_0$ so that
\begin{align*}
\int_{B_A(o)} u^- \dvol >0.
\end{align*}
By \eqref{App1:L1PPP} we can apply Lemma \ref{Lem3:Inequality3} to the function $f:[A,+\infty) \to \rr_{>0}$ given by
\begin{align*}
f:r\mapsto \int_{B_r(o)} u^- \dvol
\end{align*}
with $\beta=1$, and we get
\begin{align*}
f(R)\geq \left(\frac{R}{A} \right)^{\log_\sigma (\alpha A^\gamma+1)} \frac{f(A)}{\alpha A^\gamma + 1}
\end{align*}
for every $R>A$. Choosing $A\geq R_0$ big enough so that
\begin{align*}
\log_\sigma(\alpha A^\gamma +1) \geq k+1
\end{align*}
we have
\begin{align*}
f(R) \geq \left(\frac{R}{A} \right)^{k+1} \frac{f(A)}{\alpha A^\gamma + 1}
\end{align*}
for every $R>A$, thus obtaining a contradiction to \eqref{Eq1:L1PPP}. Hence $u^-=0$ almost everywhere, implying the claim.
\end{proof}

As showed by D. Bianchi and A.G. Setti in \cite[Corollary 2.3]{bianchi2018laplacian}, a sufficient condition for the existence of a family $\{\phi_R\}_R$ satisfying \eqref{Eq0:L1PPP} is a sub-quadratic decay of the Ricci curvature. Whence, we get the following corollary.

\begin{corollary}\label{Cor3:L1BianchiSetti}
Let $(M,g)$ be a complete Riemannian manifold of dimension $m$ and $\lambda$ a positive constant. Consider $o\in M$ and assume that 
\begin{align*}
\ric_g \geq -( m-1) C^2 (1+r^2)^\eta,
\end{align*}
where $C$ is a positive constant, $\eta \in [-1,1)$ and $r(x):=d(x,o)$ is the intrinsic distance from $o$ in $M$.
If $u\in \lpl{1}{M}$ satisfies $-\Delta u+\lambda u\geq 0$ in the sense of distributions and, for some $k \in \mathbb{N}$,
\begin{align*}
\int_{B_R(o)} u^- \dvol = O(R^k) \quad as\ R\to +\infty,
\end{align*}
then $u\geq 0$ almost everywhere in $M$.
\end{corollary}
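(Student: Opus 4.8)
The plan is to obtain this as an immediate consequence of Theorem \ref{Thm2:L1PPP}: the sub-quadratic lower bound on the Ricci curvature enters only through the existence of the family of cut-off functions $\{\phi_R\}_R$ required in \eqref{Eq0:L1PPP}, and this existence is precisely what \cite[Corollary 2.3]{bianchi2018laplacian} provides. So the whole proof reduces to quoting that construction in the form we need and then invoking Theorem \ref{Thm2:L1PPP}.

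In detail, I would first fix $\sigma>1$ (for instance $\sigma=2$) and apply the Bianchi--Setti result: under $\ric_g\geq -(m-1)C^2(1+r^2)^\eta$ with $\eta\in[-1,1)$, for every sufficiently large $R$ there exists $\phi_R\in\ccomp{\infty}{M}$ (in particular $\phi_R\in\ccomp{2}{M}$) with $0\leq\phi_R\leq 1$, $\phi_R\equiv 1$ on $B_R(o)$, $\operatorname{supp}(\phi_R)\subset B_{\sigma R}(o)$, and $|\Delta\phi_R|\leq C' R^{-\gamma}$ on $M$, where $C'=C'(\sigma)$ is independent of $R$ and $\gamma=\gamma(\eta)$ is a positive constant. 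The point to keep in mind is that $\gamma>0$ holds exactly because the Ricci lower bound decays \emph{strictly} sub-quadratically, i.e.\ $\eta<1$; if $\eta$ were allowed to equal $1$ the construction would only give $\gamma=0$ and Theorem \ref{Thm2:L1PPP} would not apply. Thus condition \eqref{Eq0:L1PPP} is verified with this $\sigma$, this $\gamma>0$, and $R_0$ equal to the threshold furnished by the cited corollary.

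Having checked \eqref{Eq0:L1PPP}, the remaining hypotheses of Theorem \ref{Thm2:L1PPP} are assumed outright: $u\in\lpl{1}{M}$ solves $-\Delta u+\lambda u\geq 0$ in the sense of distributions with $\lambda$ a positive constant, and $\int_{B_R(o)} u^-\dvol=O(R^k)$ as $R\to+\infty$ for some $k\in\nn$, which is exactly \eqref{Eq1:L1PPP}. Theorem \ref{Thm2:L1PPP} then yields $u\geq 0$ almost everywhere in $M$, proving the corollary. There is essentially no analytic obstacle here; the only care needed is bookkeeping --- making sure that the cut-offs extracted from \cite{bianchi2018laplacian} have the regularity, the support nesting, and above all the $R$-uniform Laplacian decay with a strictly positive exponent demanded by \eqref{Eq0:L1PPP}, which is guaranteed by the strict inequality $\eta<1$.
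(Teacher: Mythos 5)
Your proposal is correct and follows exactly the route the paper intends: the Ricci lower bound with $\eta\in[-1,1)$ is used solely to invoke the Bianchi--Setti construction of cut-offs satisfying \eqref{Eq0:L1PPP} with some $\gamma>0$, after which Theorem \ref{Thm2:L1PPP} applies verbatim. Your side remark on why strict sub-quadraticity ($\eta<1$) is what guarantees $\gamma>0$ is a nice clarification but does not change the substance.
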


\subsection{Cut-off functions with equibounded laplacians}

The second theorem of this section is an $L^{1}_{loc}$ positivity preserving property based on the existence of a family of cut-off functions with equibounded laplacians. The structure of the proof is very similar to the one adopted for Theorem \ref{Thm2:L1PPP} and it makes use of the following iterative lemma.

\begin{lemma}\label{Lem2:Inequality2}
Let $A>0$ and $f:[A,+\infty)\to (0,+\infty)$ be a nondecreasing function. Suppose there exist $\alpha>1$ and $\sigma>1$ so that
\begin{align}\label{Eq1:Inequality2}
f(r)\leq \frac{1}{\alpha} f(\sigma r)
\end{align}
for every $r\geq A$. Then, $f$ satisfies
\begin{align*}
f(R)\geq f(A) \left(\frac{R}{A \sigma} \right)^\theta
\end{align*}
for every $R>A$, where $\theta=\frac{\ln(\alpha)}{\ln(\sigma)}>0$.
\end{lemma}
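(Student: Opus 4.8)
The statement is the exact analogue of Lemma \ref{Lem3:Inequality3} in the "pure power'' regime $\gamma=0$, so I would mimic that argument, only simpler. The plan is to iterate the one–step inequality \eqref{Eq1:Inequality2} along the geometric sequence $A,\sigma A,\sigma^2 A,\dots$, then compare a general radius $R$ with the two consecutive terms of this sequence that bracket it, and finally turn the resulting integer exponent into the real exponent $\theta=\ln\alpha/\ln\sigma$.

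\textbf{Step 1 (iteration).} Since $f(r)\le \alpha^{-1}f(\sigma r)$ for every $r\ge A$, applying this with $r=A,\sigma A,\dots,\sigma^{n-1}A$ and composing gives
\begin{align*}
f(A)\le \alpha^{-n} f(\sigma^n A),\qquad\text{i.e.}\qquad f(\sigma^n A)\ge \alpha^n f(A)\quad\text{for all }n\in\mathbb N.
\end{align*}
(Each intermediate point $\sigma^j A$ is $\ge A$, so the hypothesis applies at every stage.)

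\textbf{Step 2 (bracketing a general $R$).} Given $R>A$, let $n=n(R,A,\sigma)\in\mathbb N$ be the unique integer with $\sigma^n A\le R\le \sigma^{n+1}A$. Because $f$ is nondecreasing, $f(R)\ge f(\sigma^n A)\ge \alpha^n f(A)$ by Step 1. From $R\le \sigma^{n+1}A$ one gets $n\ge \log_\sigma(R/A)-1=\log_\sigma\!\big(R/(A\sigma)\big)$, and since $\alpha>1$ the function $x\mapsto\alpha^x$ is increasing, whence
\begin{align*}
\alpha^n\ge \alpha^{\log_\sigma(R/(A\sigma))}=\Big(\tfrac{R}{A\sigma}\Big)^{\log_\sigma\alpha}=\Big(\tfrac{R}{A\sigma}\Big)^{\theta}.
\end{align*}
Combining, $f(R)\ge f(A)\big(R/(A\sigma)\big)^{\theta}$, which is the claim.

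\textbf{Expected obstacle.} There is essentially no analytic difficulty here; the only point requiring a word of care is the range $R\in(A,\sigma A)$, where $n=0$. There Step 1 only gives $f(R)\ge f(A)$, but this still suffices because $R/(A\sigma)\le 1$ and $\theta>0$ force $\big(R/(A\sigma)\big)^{\theta}\le 1$, so the asserted inequality holds trivially. Everything else is the same bookkeeping with $\log_\sigma$ already used in Lemmas \ref{Lem4:Inequality4} and \ref{Lem3:Inequality3}.
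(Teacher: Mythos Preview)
Your proof is correct and follows essentially the same argument as the paper: iterate \eqref{Eq1:Inequality2} along the geometric sequence $\sigma^n A$, bracket a general $R$ between $\sigma^n A$ and $\sigma^{n+1}A$, and convert the integer exponent via $\alpha^n\ge \alpha^{\log_\sigma(R/(A\sigma))}=(R/(A\sigma))^\theta$. Your explicit remark on the range $R\in(A,\sigma A)$ (where $n=0$) is a nice touch the paper leaves implicit.
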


\begin{proof}
Iterating \eqref{Eq1:Inequality2}, for every $n\in \mathbb{N}$ we get
\begin{align*}
f(r)\leq \frac{1}{\alpha^n} f(\sigma^n r)
\end{align*}
for any $r\geq A$. It follows that for any $R>A$
\begin{align*}
f(R)\geq f(A \sigma^n)\geq \alpha^n f(A) \geq \alpha^{\log_{\sigma}\left(\frac{R}{A\sigma}\right)} f(A)=f(A)\left(\frac{R}{A\sigma}\right)^{\frac{\ln(\alpha)}{\ln(\sigma)}},
\end{align*}
where $n=n(R,A,\sigma)$ is the unique natural number satisfying $\sigma^{n+1}\geq \frac{R}{A}\geq \sigma^n$. This concludes the proof.
\end{proof}

We can now state  our second main theorem that involves functions with an $L^{1}$-controlled growth.

\begin{theorem}[Generalized $L^1\loc$ positivity preserving property]\label{Thm3:L1PPP_V2}
Let $(M,g)$ be a complete Riemannian manifold and $\lambda$ a positive constant. Assume that for a fixed $o\in M$ there exist some positive constants $C$ and $R_0$ and a constant $\sigma>1$ satisfying the following condition: for every $R>R_0$ there exists $\phi_R\in \ccomp{2}{M}$ such that 
\begin{align}\label{Eq0:L1PPP_V2}
\system{ll}{0\leq \phi_R \leq 1 & \inn M\\ \phi_R\equiv 1 & \inn B_R(o) \\ \textnormal{supp}( \phi_R) \subset B_{\sigma R}(o) \\ |\Delta \phi_R|\leq C & \inn M.}
\end{align}
If $u\in \lpl{1}{M}$ satisfies $-\Delta u+\lambda u \geq 0$ in the sense of distributions and
\begin{align}\label{Eq1:L1PPP_V2}
\int_{B_R(o)} u^- \dvol = o(R^\theta) \quad as\ R\to +\infty
\end{align}
with $\theta=\frac{\ln\left(1+\frac{\lambda}{C}\right)}{\ln(\sigma)}$, then $u\geq 0$ almost everywhere in $M$.
\end{theorem}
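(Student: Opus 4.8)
The plan is to mimic the proof of Theorem \ref{Thm2:L1PPP}, replacing the iterative Lemma \ref{Lem3:Inequality3} by Lemma \ref{Lem2:Inequality2}. First I would invoke the Brezis--Kato inequality to reduce to the statement that $u^- \in \lpl{1}{M}$ satisfies $\Delta u^- \geq \lambda u^-$ in the sense of distributions, which is licit since $\lambda u^- \in \lpl{1}{M}$ as $\lambda$ is a constant. Testing this distributional inequality against the cut-off $\phi_R$ supplied by \eqref{Eq0:L1PPP_V2} gives
\begin{align*}
\lambda \int_M u^- \phi_R \dvol \leq \int_M u^- \Delta \phi_R \dvol \quad \forall R > R_0,
\end{align*}
and exploiting $\phi_R \equiv 1$ on $B_R(o)$, $\mathrm{supp}(\phi_R) \subset B_{\sigma R}(o)$ and $|\Delta \phi_R| \leq C$ yields
\begin{align*}
\lambda \int_{B_R(o)} u^- \dvol \leq C \int_{B_{\sigma R}(o) \setminus B_R(o)} u^- \dvol \quad \forall R > R_0.
\end{align*}

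Next, as in Theorem \ref{Thm2:L1PPP}, I would add $C \int_{B_R(o)} u^- \dvol$ to both sides; this reorganizes the estimate into the self-improving form
\begin{align*}
\int_{B_R(o)} u^- \dvol \leq \frac{C}{\lambda + C} \int_{B_{\sigma R}(o)} u^- \dvol = \frac{1}{1 + \frac{\lambda}{C}} \int_{B_{\sigma R}(o)} u^- \dvol \quad \forall R > R_0,
\end{align*}
which is exactly hypothesis \eqref{Eq1:Inequality2} of Lemma \ref{Lem2:Inequality2} with $\alpha = 1 + \frac{\lambda}{C} > 1$ and the given $\sigma > 1$. Note the factor $\frac{C}{\lambda+C}$ is strictly less than $1$ precisely because $\lambda > 0$, so the iteration genuinely gains.

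Then I would argue by contradiction: if $u^- \neq 0$ a.e., pick $A \geq R_0$ with $f(A) := \int_{B_A(o)} u^- \dvol > 0$; the function $f(r) := \int_{B_r(o)} u^- \dvol$ is nondecreasing and positive on $[A,+\infty)$, so Lemma \ref{Lem2:Inequality2} applies and gives
\begin{align*}
f(R) \geq f(A) \left(\frac{R}{A\sigma}\right)^{\theta}, \quad \theta = \frac{\ln\left(1+\frac{\lambda}{C}\right)}{\ln \sigma},
\end{align*}
for all $R > A$. This forces $\liminf_{R\to+\infty} R^{-\theta} \int_{B_R(o)} u^- \dvol \geq f(A)(A\sigma)^{-\theta} > 0$, contradicting the growth assumption \eqref{Eq1:L1PPP_V2} that $\int_{B_R(o)} u^- \dvol = o(R^\theta)$. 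Hence $u^- = 0$ a.e., i.e.\ $u \geq 0$ a.e. I do not foresee a genuine obstacle here: the argument is a routine adaptation, the only point requiring a little care being the verification that the threshold exponent $\theta$ produced by Lemma \ref{Lem2:Inequality2} matches the one in \eqref{Eq1:L1PPP_V2} (it does, since $\alpha = 1 + \lambda/C$), and that the "$o$" in \eqref{Eq1:L1PPP_V2} — rather than the "$O(R^k)$" of Theorem \ref{Thm2:L1PPP} — is exactly what is needed to contradict a lower bound of order $R^\theta$ with the \emph{same} exponent.
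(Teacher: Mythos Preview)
Your proposal is correct and follows essentially the same route as the paper's proof: Brezis--Kato, test against $\phi_R$, add $C\int_{B_R(o)}u^-\dvol$ to both sides to obtain the recursive inequality $f(R)\leq \frac{C}{\lambda+C}f(\sigma R)$, then invoke Lemma \ref{Lem2:Inequality2} with $\alpha=1+\lambda/C$ to force a lower bound of order $R^\theta$ and contradict \eqref{Eq1:L1PPP_V2}. Your write-up is in fact slightly more careful than the paper's in two places: you explicitly take $A\geq R_0$ (so that the recursive inequality is valid on all of $[A,+\infty)$), and you remark why the ``$o$'' in \eqref{Eq1:L1PPP_V2} is precisely what is needed here, as opposed to the ``$O(R^k)$'' condition in Theorem \ref{Thm2:L1PPP}.
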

\begin{proof}
As in the proof of Theorem \ref{Thm2:L1PPP}, we get
\begin{align*}
\lambda \int_M u^- \phi_R \dvol \leq \int_M u^- \Delta \phi_R \quad \quad \forall R>R_0
\end{align*}
that implies
\begin{align}\label{App1:L1PPP_V2}
\int_{B_R(o)} u^- \dvol \leq \frac{C}{\lambda +C} \int_{B_{\sigma R}(o)} u^- \dvol \quad \quad \forall R>R_0.
\end{align}
If $u^-\neq 0$ almost everywhere in $M$, then there exists $A>0$ such that
\begin{align*}
\int_{B_A(o)}u^- \dvol >0.
\end{align*}
By \eqref{App1:L1PPP_V2} we can apply Lemma \ref{Lem2:Inequality2} with
\begin{align*}
f:r\mapsto \int_{B_r(o)}u^- \dvol
\end{align*}
and $\alpha=\frac{C+\lambda}{C}$ and we deduce that $f(R)\geq C_0 R^{\frac{\ln(\alpha)}{\ln(\sigma)}}$ for every $R>A$, where $C_0>0$. This contradicts \eqref{Eq1:L1PPP_V2}. Hence $u^-=0$ almost everywhere in $M$, as required.
\end{proof}

In the proof of \cite[Corollary 4.1]{impera2022higher}, the authors obtained a family of cutoff functions satisfying \eqref{Eq0:L1PPP_V2} under the only assumption of a lower bound on the Ricci curvature. As a consequence, we obtain the following
\begin{corollary}\label{Cor4:L1MariniVeronelli}
Let $(M,g)$ be a complete Riemannian manifold and $\lambda$ a positive constant. Consider $o\in M$ and assume that 
\begin{align*}
\ric_g(x) \geq -G^2 (r(x))
\end{align*}
for every $x\in M\setminus B_R(o)$, where $G\in C^\infty$ is given by
\begin{align*}
G(t)=\alpha t \prod_{0\leq j\leq k} \ln^{[j]}(t)
\end{align*}
for $t>1$, $\alpha>0$ and $k\in \mathbb{N}$.
Then, there exists a constant $\theta=\theta(\lambda,M,\alpha, k)>0$ such that if $u\in \lpl{1}{M}$ satisfies $-\Delta u+\lambda u\geq 0$ in the sense of distributions and
\begin{align*}
\int_{B_R(o)} u^- \dvol = o(R^\theta) \quad as\ R\to +\infty,
\end{align*}
then $u\geq 0$ almost everywhere in $M$. In particular, the $L^1$ positivity preserving property holds true in the family of functions
\begin{align*}
\{u\in \lpl{1}{M}\ :\ u^- \in L^1(M)\}.
\end{align*}
\end{corollary}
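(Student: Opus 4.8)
The plan is to obtain the corollary as an immediate application of Theorem \ref{Thm3:L1PPP_V2}. The only thing to check is that the assumed lower Ricci bound
\[
\ric_g(x)\geq -G^2(r(x)),\qquad G(t)=\alpha t\prod_{0\leq j\leq k}\ln^{[j]}(t),
\]
produces a family $\{\phi_R\}_{R>R_0}$ of cut-off functions satisfying \eqref{Eq0:L1PPP_V2} with constants $\sigma>1$ and $C>0$ that do not depend on $R$; once this is done, the conclusion and the explicit value of $\theta$ are read off directly from that theorem.

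For the first step I would invoke the cut-off construction carried out in the proof of \cite[Corollary 4.1]{impera2022higher}. Two minor preliminary remarks make its hypotheses match ours. First, since $\ric_g$ is continuous it is bounded below on the compact ball outside which the assumed estimate holds, so one may assume the bound $\ric_g\geq -\widetilde{G}^2(r)$ holds globally on $M$ with $\widetilde{G}$ still of the prescribed form. Second, the function $G$ sits exactly at the borderline growth for which cut-offs with \emph{equibounded} Laplacian exist, i.e. $\int^{+\infty} dt/G(t)=+\infty$ although $G$ is allowed to grow faster than linearly; this is precisely the setting handled in \cite{impera2022higher}, where such cut-offs are produced by composing a suitable primitive of $1/G$ with a fixed profile function. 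From that construction one extracts, for every $R>R_0=R_0(M,\alpha,k)$, a function $\phi_R\in\ccomp{2}{M}$ with $0\leq\phi_R\leq1$, $\phi_R\equiv1$ on $B_R(o)$, $\mathrm{supp}(\phi_R)\subset B_{\sigma R}(o)$ and $|\Delta\phi_R|\leq C$ on $M$, where $\sigma>1$ and $C=C(M,\alpha,k)>0$ are independent of $R$.

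With such a family available, Theorem \ref{Thm3:L1PPP_V2} applies with these $\sigma$, $C$ and the given constant $\lambda$, yielding the threshold
\[
\theta:=\frac{\ln\!\left(1+\tfrac{\lambda}{C}\right)}{\ln\sigma}>0,
\]
which depends only on $\lambda$, $M$, $\alpha$, $k$ (through $\sigma$ and $C$): every distributional solution $u\in\lpl{1}{M}$ of $-\Delta u+\lambda u\geq0$ with $\int_{B_R(o)}u^-\dvol=o(R^\theta)$ is nonnegative a.e. For the final assertion, if $u^-\in L^1(M)$ then $\int_{B_R(o)}u^-\dvol\leq\|u^-\|_{L^1(M)}$ is bounded, hence $o(R^\theta)$ for the $\theta>0$ just found, and the previous paragraph gives $u\geq0$ a.e.; this is exactly the $L^1$ positivity preserving property on $\{u\in\lpl{1}{M}:u^-\in L^1(M)\}$.

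The only real obstacle is the passage described in the second paragraph: verifying that the construction of \cite{impera2022higher} can be normalised so that the support radius is a fixed dilation $\sigma R$ of the inner radius $R$ and the Laplacian is bounded by a constant uniform in $R$, which is where the actual work lies. Everything else is a mechanical application of Theorem \ref{Thm3:L1PPP_V2} together with the trivial fact that an $L^1(M)$ function has uniformly bounded integrals over geodesic balls.
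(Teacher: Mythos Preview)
Your proposal is correct and follows exactly the paper's route: invoke the cut-off family from the proof of \cite[Corollary 4.1]{impera2022higher} to place yourself in the hypotheses of Theorem \ref{Thm3:L1PPP_V2}, and then read off the conclusion (including the explicit $\theta=\ln(1+\lambda/C)/\ln\sigma$ and the $L^1$ consequence via the trivial bound $\int_{B_R}u^-\leq\|u^-\|_{L^1}$). The paper treats the existence of such cut-offs with fixed $\sigma$ and uniform $C$ as a direct citation, so your flagged ``only real obstacle'' is not something the paper addresses either; it is simply delegated to the referenced construction.
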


\begin{remark}
In \cite[Theorem B]{bisterzo2022infty}, the authors constructed a counterexample to the $L^1$ positivity preserving property in a complete 2-manifold having Gaussian curvature with an asymptotic of the form $K(x)\sim -C r(x)^{2+\e}$, for $\e>0$. This underline that the result contained in Corollary \ref{Cor4:L1MariniVeronelli} is sharp.
\end{remark}

\begin{remark}
When stated in terms of a Liouville type property, our Corollary  \ref{Cor4:L1MariniVeronelli} compares e.g. with \cite[Theorem C]{rigoli2001liouville}, where the authors consider the case $\lambda= 0$ of subharmonic functions. Their result states that a $C^{1}$, nonnegative subharmonic function with precise pointwises exponential control and a logarithmic $L^{1}$ growth must be constant. They also provide a rotationally symmetric example $(M,g)$ with Gaussian curvature $K(x) \sim -C r(x)^{2}$ showing that, without the pointwise control, there exists an unbounded smooth solution of $\Delta u = 1$ of logarithmic $L^{1}$-growth. As a consequence, keeping the curvature restriction of Corollary \ref{Cor4:L1MariniVeronelli}, in order to obtain the Liouville result under a pure   $L^{1}$-growth condition, which is even faster than logarithmic, one has to assume that $\lambda >0$.
\end{remark}


\section{An application to minimal submanifolds}\label{Sec:Applications}

Recall that an immersed submanifold  $x:\Sigma^{n} \hookrightarrow \rr^{m}$ is said to be minimal if its mean curvature vector field satisfies $H^\Sigma=0$. It is a standard fact that the minimality condition is equivalent to the property that the coordinate functions of the isometric immersion are harmonic, i.e.,
\begin{align*}
\Delta^{\Sigma}\, x_i=0 \quad \forall i=1,...,m.
\end{align*}
In particular, this implies that for any minimal submanifold in Euclidean space,
\begin{align*}
\Delta^{\Sigma}\,  |x|^2 = 2n.
\end{align*}

As an application of the main results in Section \ref{Sec:Lploc}, we prove that complete minimal submanifold enjoy the following $L^{p}$ extrinsic distance growth condition.

\begin{corollary}\label{Thm:MinimalHypersurfaces}
Let $x: \Sigma\hookrightarrow \rr^m$ be a complete minimal submanifold and suppose there exists a positive function $\xi: \rr_{\geq 0} \to \rr_{> 0}$ such that
\begin{align*}
(d^{\rr^{m}}(x,o))^2\leq \xi(d^\Sigma(x,o))  \qquad \text{and}  \qquad  \xi(R) = O(R^{2-\e}),\, \text{as }R\to+\infty 
\end{align*}
for some constants $C>0$ and $\e\in (0,2]$ and for some fixed origin $o\in \Sigma$. Then, for every $p\in (1,+\infty)$,
\begin{align}\label{Eq:MinimalLimsup}
\limsup_{R\to +\infty} \frac{\int_{B_R^\Sigma(o)} \xi^{p} \dvol_{\Sigma}}{e^{\theta R^\frac{\e}{2}}}>0,
\end{align}
where $\theta=\sqrt{\frac{(p-1)C}{e-1}}$.
\end{corollary}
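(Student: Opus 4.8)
The plan is to reduce the statement to a direct application of Theorem~\ref{Thm1:LpPPP} (the generalized $L^p\loc$ positivity preserving property) via an appropriately chosen auxiliary function on $\Sigma$. First I would consider the extrinsic distance function $\rho(y) := d^{\rr^m}(x(y), o_{\rr^m})$ (where I place the Euclidean origin at the image of $o$), or more precisely its square $\rho^2 = |x|^2$, which by minimality satisfies $\Delta^\Sigma \rho^2 = 2n$ on $\Sigma$. The idea is to build from this a function $u$ that is a distributional \emph{negative} subsolution of $-\Delta u + \lambda u \geq 0$ for a suitable potential $\lambda$ comparable to $(1+d^\Sigma(\cdot,o))^{-(2-\e)}$, and then argue by contradiction: if the $L^p$-growth of $\xi$ (hence of $u^-$) over intrinsic balls were $o(e^{\theta R^{\e/2}})$, Theorem~\ref{Thm1:LpPPP} would force $u \geq 0$, contradicting its construction as a strictly negative function.

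The key algebraic step is the choice of $\lambda$ and $u$. Since $\xi(R) = O(R^{2-\e})$ and $\rho^2 \leq \xi(d^\Sigma(\cdot,o))$, set $\lambda(y) := \dfrac{2n}{\xi(d^\Sigma(y,o))}$ (or a minorant of it of the form $C'(1+d^\Sigma(y,o))^{-(2-\e)}$ coming from the growth hypothesis on $\xi$, adjusting constants so that $\theta$ matches) and $u := -\xi(d^\Sigma(\cdot,o))$, or better $u := -\rho^2 - \text{const}$ so that we can exploit $\Delta^\Sigma\rho^2 = 2n$ exactly. Then formally $-\Delta u + \lambda u = -2n - \lambda \xi(d^\Sigma(\cdot,o)) \cdot(\dots)$; one must arrange the comparison $\lambda \cdot (\text{the negative quantity } u) \geq \Delta u$ so that the inequality goes the right way, which is where the bound $\rho^2 \leq \xi(d^\Sigma(\cdot,o))$ enters: it gives $-\lambda u = \lambda\,\xi(d^\Sigma(\cdot,o)) \geq \lambda \rho^2 = 2n \cdot \rho^2/\xi(d^\Sigma(\cdot,o)) \cdot (\dots)$, and pairing this with $\Delta u = -\Delta\rho^2 = -2n$ yields $-\Delta u + \lambda u \geq 0$ provided $\lambda\,\xi \geq 2n$, i.e. the very definition of $\lambda$. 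A small technical point: distances may not be smooth, so I would either work with $\rho^2 = |x|^2$ (which \emph{is} smooth on $\Sigma$ and for which $\Delta^\Sigma|x|^2 = 2n$ holds classically) and use a monotone majorant $\tilde\xi(t) = O(t^{2-\e})$ of $\xi$, or invoke that the relevant inequalities hold in the distributional sense because $d^\Sigma(\cdot,o)$ is locally Lipschitz and the pieces fit together by a standard cut-locus argument.

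Once $u$ is set up as a genuine (distributional) solution of $-\Delta u + \lambda u \geq 0$ on the complete manifold $\Sigma$ with $\lambda \geq C(1 + d^\Sigma(\cdot,o))^{-(2-\e)}$ and $0 \leq \lambda \in L^\infty\loc$, the argument concludes as follows: suppose, for contradiction, that \eqref{Eq:MinimalLimsup} fails, i.e. $\int_{B_R^\Sigma(o)} \xi^p \dvol_\Sigma = o(e^{\theta R^{\e/2}})$. Since $u^- = |u| = \xi(d^\Sigma(\cdot,o))$ (up to the additive constant, which only changes things by a controlled amount absorbable into the $o(\cdot)$, or is avoided by working with $\rho^2$ whose $p$-power is dominated by $\xi^p$ pointwise), we get $\int_{B_R^\Sigma(o)} (u^-)^p \dvol_\Sigma = o(e^{\theta R^{\e/2}})$, which is exactly hypothesis \eqref{Eq1:LpPPP}. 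Theorem~\ref{Thm1:LpPPP} then gives $u \geq 0$ on $\Sigma$ — impossible, since $u$ is strictly negative by construction. This contradiction proves \eqref{Eq:MinimalLimsup}. The main obstacle, and the step deserving the most care, is the first one: verifying that the chosen $u$ and $\lambda$ genuinely satisfy the differential inequality in the distributional sense \emph{with the right constant $C$} (so that the resulting $\theta$ is the stated $\sqrt{(p-1)C/(e-1)}$), handling the low regularity of the intrinsic distance, and making sure the $L^p$-growth of $u^-$ is controlled precisely by $\int_{B_R^\Sigma(o)}\xi^p\dvol_\Sigma$ rather than something larger.
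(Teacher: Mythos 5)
Your proposal takes essentially the same approach as the paper's proof: set $w=|x|^2$ (so $\Delta^\Sigma w=2n$ by minimality), define $\lambda := 2n/\xi(d^\Sigma(\cdot,o))$, observe that the hypothesis $\xi(R)=O(R^{2-\e})$ gives the required lower bound on $\lambda$ and that $w\le\xi$ yields $\Delta^\Sigma w=2n=\lambda\xi\ge\lambda w$, and then apply Theorem~\ref{Thm1:LpPPP} (in its Liouville form, to the nonpositive function $-w$) to conclude $w\equiv 0$, a contradiction. Two of your side concerns can be dropped: the additive constant and the worry about strict negativity are unnecessary, since the Liouville consequence of Theorem~\ref{Thm1:LpPPP} already forces $w\equiv 0$, which is impossible for an isometric immersion; and since you work with the smooth function $|x|^2$ and $\lambda$ only needs to be in $L^\infty_{loc}$ (never differentiated), there is no regularity issue with $d^\Sigma$ to handle.
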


\begin{proof}
Without loss of generality we can suppose $o=0\in \rr^{m}$. Let
\[
w(x):=d^{\rr^{m}}(x,o)=|x|^2
\]
and define
\[
\lambda(x):=\frac{2n}{\xi(d^{\Sigma}(x,o))}.
\]
Then
\begin{align*}
\Delta^\Sigma\,  w=2n=\lambda \xi \geq \lambda w.
\end{align*}
By contradiction, suppose that  \eqref{Eq:MinimalLimsup} is not satisfied for some $p\in (1,+\infty)$. Then
\begin{align*}
0 = \limsup_{R\to +\infty} \frac{\int_{B_R^\Sigma(o)} \xi^p \dvol_{\Sigma}}{e^{\theta R^\frac{\e}{2}}} \geq \limsup_{R\to +\infty} \frac{\int_{B_R^\Sigma(o)} w^p \dvol_{\Sigma}}{e^{\theta R^\frac{\e}{2}}} \geq 0,
\end{align*}
showing that
\[
\int_{B^{\Sigma}_{R}(o)}w^{p} \dvol_{\Sigma} = o(e^{\theta R^\frac{\e}{2}}),\, R \to +\infty.
\]
An application of Theorem \ref{Thm1:LpPPP}, in the form of a Liouville type result, yields that $w \equiv 0$. Contradiction.
\end{proof}

\begin{remark}
In the assumption of Corollary \ref{Thm:MinimalHypersurfaces} we get an asymptotic estimate on the behavior of $|B_R^\Sigma|$. Indeed, since there exist two constants $C>0$ and $\e \in (0,2]$ such that
\begin{align*}
\xi(x)\leq C \left(1+d^\Sigma(x,o) \right)^{2-\e},
\end{align*}
then
\begin{align*}
\int_{B_R^\Sigma(o)} \xi^p \dvol \leq  C \int_{B_R^\Sigma(o)} (1+d^\Sigma(x,o))^{(2-\e)p} \dvol \leq C (1+R)^{(2-\e)p} |B_R^\Sigma(o)|.
\end{align*}
By \eqref{Eq:MinimalLimsup} it follows
\begin{align*}
\limsup_{R\to +\infty} \frac{(1+R)^{(2-\e)p} |B_R^\Sigma(o)|}{e^{\theta R^\frac{\e}{2}}}>0.
\end{align*}
Whence, we obtain the validity of the following nonexistence result.
\end{remark}

\begin{corollary}
There are no complete minimal submanifolds $\Sigma^n\hookrightarrow \rr^{m}$ satisfying the following conditions:
\begin{enumerate}
\item [a)] the extrinsic distance from a fixed origin $o \in \Sigma$ satisfies
\begin{align*}
\left(d^{\rr^{m}}(x,o)\right)^2\leq \xi(d^\Sigma (x,o))
\end{align*}
with 
\begin{align*}
\xi(R)=O(R^{2-\e}) \quad \text{as } R \to +\infty
\end{align*}
for some $\e \in (0,2]$;

\item [b)] the intrinsic geodesic balls of $\Sigma$ centered at $o$ satisfy the asymptotic estimate
\[
|B_R^\Sigma(o)|=o\left(R^{-(2-\e)p}e^{\theta R^{\frac{\e}{2}}} \right)\, \text{ as } R\to +\infty,
\]
with $\theta=\sqrt{\frac{(p-1)C}{e-1}}$ and $p\in(1,+\infty)$.
\end{enumerate}
\end{corollary}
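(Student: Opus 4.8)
The plan is to derive this nonexistence result directly from the preceding Corollary \ref{Thm:MinimalHypersurfaces} by a simple contradiction argument, exactly in the spirit of the remark that immediately precedes the statement. Suppose, for contradiction, that such a complete minimal submanifold $\Sigma^n \hookrightarrow \rr^m$ exists, satisfying both hypothesis (a) and hypothesis (b) for some $\e \in (0,2]$ and some $p \in (1,+\infty)$. Condition (a) is precisely the extrinsic distance growth hypothesis required to invoke Corollary \ref{Thm:MinimalHypersurfaces}, so that corollary applies and yields
\[
\limsup_{R\to +\infty} \frac{\int_{B_R^\Sigma(o)} \xi^p \dvol_\Sigma}{e^{\theta R^{\e/2}}} > 0,
\]
with $\theta = \sqrt{\tfrac{(p-1)C}{e-1}}$.

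Next I would convert condition (b), which is a bound on the volume of geodesic balls, into a bound on $\int_{B_R^\Sigma(o)} \xi^p \dvol_\Sigma$ that contradicts the displayed $\limsup$. Since $\xi(R) = O(R^{2-\e})$ there is a constant $C>0$ with $\xi(d^\Sigma(x,o)) \leq C(1+d^\Sigma(x,o))^{2-\e}$ for all $x \in \Sigma$ (absorbing the $O$-constant into $C$, consistently with the $C$ appearing in $\theta$). Then, monotonicity of the integral gives
\[
\int_{B_R^\Sigma(o)} \xi^p \dvol_\Sigma \leq C^p \int_{B_R^\Sigma(o)} (1+d^\Sigma(x,o))^{(2-\e)p}\dvol_\Sigma \leq C^p (1+R)^{(2-\e)p}\, |B_R^\Sigma(o)|,
\]
using that $d^\Sigma(x,o) \leq R$ on $B_R^\Sigma(o)$. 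By hypothesis (b), $|B_R^\Sigma(o)| = o\!\left(R^{-(2-\e)p} e^{\theta R^{\e/2}}\right)$, and since $(1+R)^{(2-\e)p} = O(R^{(2-\e)p})$ as $R\to+\infty$, multiplying these estimates shows
\[
\int_{B_R^\Sigma(o)} \xi^p \dvol_\Sigma = o\!\left(e^{\theta R^{\e/2}}\right), \qquad R\to+\infty,
\]
i.e.\ the above $\limsup$ equals $0$. This contradicts the conclusion of Corollary \ref{Thm:MinimalHypersurfaces}, completing the proof.

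There is really no substantive obstacle here: the statement is a packaging of Corollary \ref{Thm:MinimalHypersurfaces} together with the elementary volume-to-integral comparison carried out in the remark, and the only points requiring a little care are bookkeeping ones — namely making sure the constant $C$ in the growth bound $\xi(R) = O(R^{2-\e})$ is identified with (or dominated by) the constant $C$ appearing in $\theta$, and checking that the polynomial factor $(1+R)^{(2-\e)p}$ is genuinely absorbed by the $o(\cdot)$ in hypothesis (b) rather than competing with it, which it is since $(2-\e)p$ is the exact exponent appearing in (b). If one wishes to be slightly more careful about the role of $C$, one can instead phrase the argument as: pick any $C' > 0$ with $\xi(R) \leq C'(1+R)^{2-\e}$ for large $R$; then the same computation gives $\int_{B_R^\Sigma}\xi^p \dvol_\Sigma = o\!\big((C')^p (1+R)^{(2-\e)p} R^{-(2-\e)p} e^{\theta R^{\e/2}}\big) = o(e^{\theta R^{\e/2}})$, and the contradiction with Corollary \ref{Thm:MinimalHypersurfaces} is reached regardless of how the constants are matched.
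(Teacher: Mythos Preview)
Your proposal is correct and follows exactly the same approach as the paper: the corollary is stated immediately after a remark which contains precisely the volume-to-integral estimate you reproduce, and the paper simply says ``Whence, we obtain the validity of the following nonexistence result'' without giving a separate proof. Your write-up is in fact slightly more careful than the paper's remark about the constants and the matching of the polynomial factors $(1+R)^{(2-\e)p}$ and $R^{-(2-\e)p}$.
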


\begin{remark}
We stress that in case $\e=2$, i.e. for bounded minimal submanifolds, the volume growth we obtained is far from being optimal. Indeed, in \cite{Karp-MathAnn} and \cite{PRS-PAMS} the authors achieved the rate $|B_R^\Sigma(o)|=O(e^{CR^2})$. This discrepancy comes from the fact that we use integral techniques and estimates.
\end{remark}

\bibliography{references}

\begin{thebibliography}{10}

\bibitem{bianchi2018laplacian}
D.~Bianchi and A.~G. Setti.
\newblock Laplacian cut-offs, porous and fast diffusion on manifolds and other
  applications.
\newblock {\em Calculus of Variations and Partial Differential Equations},
  57(1):1--33, 2018.

\bibitem{bisterzo2022infty}
A.~Bisterzo and L.~Marini.
\newblock The ${L}^{\infty}$-positivity {P}reserving {P}roperty and
  {S}tochastic {C}ompleteness.
\newblock {\em Potential Analysis}, pages 1--18, 2022.

\bibitem{braverman2002essential}
M.~Braverman, O.~Milatovic, and M.~Shubin.
\newblock Essential self-adjointness of {S}chr{\"o}dinger-type operators on
  manifolds.
\newblock {\em Russian Mathematical Surveys}, 57(4):641, 2002.

\bibitem{guneysu2023new}
B.~G{\"u}neysu, S.~Pigola, P.~Stollmann, and G.~Veronelli.
\newblock A new notion of subharmonicity on locally smoothing spaces, and a
  conjecture by {B}raverman, {M}ilatovic, {S}hubin.
\newblock {\em arXiv preprint arXiv:2302.09423}, 2023.

\bibitem{impera2022higher}
D.~Impera, M.~Rimoldi, and G.~Veronelli.
\newblock Higher order distance-like functions and {S}obolev spaces.
\newblock {\em Advances in Mathematics}, 396:108166, 2022.

\bibitem{Karp-MathAnn}
L.~Karp.
\newblock Differential inequalities on complete {R}iemannian manifolds and
  applications.
\newblock {\em Math. Ann.}, 272(4):449--459, 1985.

\bibitem{MARI2010665}
L.~Mari, M.~Rigoli, and A.~G. Setti.
\newblock {K}eller–{O}sserman conditions for diffusion-type operators on
  {R}iemannian manifolds.
\newblock {\em Journal of Functional Analysis}, 258(2):665--712, 2010.

\bibitem{PRS-PAMS}
S.~Pigola, M.~Rigoli, and A.~G. Setti.
\newblock A remark on the maximum principle and stochastic completeness.
\newblock {\em Proc. Amer. Math. Soc.}, 131(4):1283--1288, 2003.

\bibitem{pigola2023approximation}
S.~Pigola, D.~Valtorta, and G.~Veronelli.
\newblock Approximation, regularity and positivity preservation on {R}iemannian
  manifolds.
\newblock {\em arXiv preprint arXiv:2301.05159}, 2023.

\bibitem{pigola2021p}
S.~Pigola and G.~Veronelli.
\newblock ${L}^p$ positivity preserving and a conjecture by {M}. {B}raverman,
  {O}. {M}ilatovic and {M}. {S}hubin.
\newblock {\em arXiv preprint arXiv:2105.14847}, 2021.

\bibitem{rigoli2001liouville}
M.~Rigoli and A.~G. Setti.
\newblock Liouville type theorems for $\varphi$-subharmonic functions.
\newblock {\em Revista Matematica Iberoamericana}, 17(3):471--520, 2001.

\end{thebibliography}
\bibliographystyle{abbrv}

\end{document}